\documentclass[12pt,]{article}
\usepackage[left=1in,right=1in,top=1in,bottom=1in,footskip=.25in]{geometry}

\usepackage{amsmath}
\usepackage{amssymb}
\usepackage{stackrel}
\usepackage{graphicx}
\usepackage{epstopdf}
\usepackage{bbm}
\usepackage{geometry}
\usepackage{enumerate}
\usepackage{etoolbox}
\usepackage[ruled,lined,linesnumbered,vlined]{algorithm2e}
\usepackage{tikz}

\usepackage{ThmsAndLemmas}
\usepackage{defs}
\usepackage{prjohns2MacrosTA}
\usepackage{jemacros}

\usepackage{color}

\usepackage{url}
\newcommand{\ualpha}{\underline{\alpha}}
\newcommand{\bw}{{\bf w}}

\geometry{total={150mm,225mm}}

\title{Convergence Rates for Projective Splitting}

\author{Patrick R. Johnstone\thanks{Department of Management Science 
                                    and Information Systems, Rutgers Business
                                    School Newark and New Brunswick, Rutgers University}
        \and 
        Jonathan Eckstein$^*$}

\newcommand{\siamversion}[1]{}

\begin{document}
	\allowdisplaybreaks
\maketitle
\begin{abstract}
Projective splitting is a family of methods for solving inclusions involving
sums of maximal monotone operators. First introduced by Eckstein and Svaiter
in 2008, these methods have enjoyed significant innovation in recent years,
becoming one of the most flexible operator splitting frameworks  available.
While weak convergence of the iterates to a solution has been established,
there have been few attempts to study convergence rates of projective
splitting. The purpose of this paper is to do so under various
assumptions. To this end, there are three main contributions. First, in the
context of convex optimization, we establish an $O(1/k)$ ergodic function
convergence rate. Second, for strongly monotone inclusions, strong convergence
is established as well as an ergodic $O(1/\sqrt{k})$ convergence rate for the
distance of the iterates to the solution. Finally, for inclusions featuring
strong monotonicity and cocoercivity, linear convergence is established.
\end{abstract}

\section{Introduction}
For a real Hilbert space $\calH$, consider the problem of finding $z\in\calH$
such that 
\begin{align}\label{probCompMonoMany}
0\in \sum_{i=1}^{n} T_i z
\end{align}
where $T_i:\calH \to 2^{\calH}$ are maximal monotone
operators and additionally there exists a subset $\Iforw\subseteq\{1,\ldots,n\}$ such that for all $i\in\Iforw$ the operator $T_i$ is Lipschitz continuous.
An important instance of this problem is
\begin{align}\label{ProbOpt0}
\min_{z\in\mathcal{H}} F(z), \qquad\text{where~}\quad F(z)= \sum_{i=1}^{n}f_i(z)
\end{align}
and every $f_i:\calH\to(-\infty,+\infty]$ is closed, proper, and convex, with
some subset of the functions also being Fr\'echet differentiable with
Lipschitz-continuous gradients. Under appropriate constraint qualifications,
\eqref{probCompMonoMany} and
\eqref{ProbOpt0} are equivalent.  Problem~\eqref{ProbOpt0} arises in a host of
applications such as machine learning, signal and image processing, inverse
problems, and computer vision; 
see~\cite{boyd2011distributed,combettes2011proximal,combettes2005signal} for some examples. 

A relatively recently proposed class of operator splitting algorithms which
can solve \eqref{probCompMonoMany}, among other problems, is \emph{projective
splitting}. It originated with~\cite{eckstein2008family} and was then
generalized to more than two operators in~\cite{eckstein2009general}.  The
related algorithm in~\cite{alotaibi2014solving} introduced a technique for
handling compositions of linear and monotone operators,
and~\cite{combettes2016async} proposed an extension to ``block-iterative'' and
asynchronous operation --- block-iterative operation meaning that only a
subset of the operators making up the problem need to be considered at each
iteration (this approach may be called ``incremental" in the optimization
literature).  A restricted and simplified version of this framework appears
in~\cite{eckstein2017simplified}. The recent work
\cite{johnstone2018projective} incorporated forward steps into the projective
splitting framework for any Lipschitz continuous operators and introduced
backtracking and adaptive stepsize rules.

In general, projective splitting offers unprecedented levels of flexibility
compared with previous operator splitting algorithms 
(\emph{e.g.}~\cite{mercier1979lectures,lions1979splitting,
tseng2000modified,davis2015three}). The
framework can be applied to arbitary sums of maximal monotone operators, the stepsizes can vary by operator and
by iteration, compositions with linear operators can be handled, and
block-iterative asynchronous implementations have been demonstrated.

In previous works on projective splitting, the main theoretical goal was
to establish the weak convergence of the iterates to a solution of the
monotone inclusion under study (either a special case or a generalization of
\eqref{probCompMonoMany}). This goal was achieved using Fej\'er-monotonicity
arguments in coordination with the unique properties of projective splitting.
The question of convergence rates has not been addressed, with the sole
exception of~\cite{machado2017complexity}, which considers a different type of
convergence rate than those investigated here; we discuss the differences
between our analysis and that of~\cite{machado2017complexity} in more detail
below.


\vspace{0.2cm}   
\noindent 
{\bf Contributions}
\vspace{0.2cm}

\noindent 
 To this end, there are four main novel contributions in this paper. 
 \begin{enumerate}
 	\item For \eqref{ProbOpt0}, we establish an ergodic $\bigO(1/k)$ function
 	value convergence rate for  iterates generated by projective splitting.
 	\item When one of the operators in \eqref{probCompMonoMany} is strongly
 	monotone, we establish strong convergence, rather than weak, in the
 	general Hilbert space setting, without using the
 	Haugazeau~\cite{haugazeau68} modification employed to obtain general
 	strong convergence in~\cite{combettes2016async}. Furthermore, we derive an
 	ergodic $\bigO(1/\sqrt{k})$ convergence rate for the distance of  the iterates
 	to the unique solution of \eqref{probCompMonoMany}.
 	\item If additionally $T_1,\ldots,T_{n-1}$ are cocoercive, we establish
 	\emph{linear} convergence to $0$ of the distance of  the iterates to the
 	unique solution.
 	\item We discuss the special cases of projective splitting when $n=1$.
 	Interestingly, projective splitting reduces to one of two well-known
 	algorithms in this case depending on whether forward or backward steps are
 	used. This observation has implications for the convergence rate analysis.
 \end{enumerate}

The primary engine of the analysis is a new summability lemma
(Lemma~\ref{lemSum} below) in which important quantities of the algorithm are
shown to be summable. This summability is directly exploited in the ergodic
function value rate analysis in Section \ref{secFunc}. In
Section~\ref{secLin}, the same lemma is used to show linear convergence when
strong monotonicity and cocoercivity are present. With only strong
monotonicity present, we also obtain strong convergence and rates using a
novel analysis in Section \ref{secStrong}.

Our convergence rates apply directly to the variants of projective splitting
discussed in
\cite{combettes2016async,eckstein2008family,johnstone2018projective}. The
papers \cite{eckstein2017simplified,eckstein2009general} use a slightly
different separating hyperplane formulation than ours but the difference is
easy to resolve. However, our analysis does not allow for the asynchrony or
block-iterative effects developed in
\cite{combettes2016async,eckstein2017simplified,johnstone2018projective}. In
particular, at each iteration we assume that every operator is processed and
that the computations use the most up-to-date information.
Developing a convergence rate analysis which extends to asynchronous and
block-iterative operation in a satisfactory manner is a matter for future
work.

In \cite{combettes2016async,eckstein2017simplified,johnstone2018projective},
projective splitting was extended to handle the composition of each $T_i$ with
a linear operator. While it is possible to extend all of our convergence rate
results to allow for this generalization under appropriate conditions, for the
sake of readability we will not do so here.

In Section~\ref{secnis1} we consider the case $n=1$. In this case, we show
that projective splitting reduces to the proximal point method~\cite{Roc76a}
if one uses backward steps, or to a special case of the extragradient method
(with no constraint)
\cite{korpelevich1977extragradient,nguyen2018extragradient} when one uses
forward steps. Since projective splitting includes the proximal point method
as a special case, the $\bigO(1/k)$ function value convergence rate derived in
Section~\ref{secFunc} cannot be improved, since this is the best rate
available for the proximal point method, as established
in~\cite{guler1991convergence}.

The specific outline for the paper is as follows. Immediately below, we
discuss in more detail the convergence rate analysis for projective splitting
conducted in \cite{machado2017complexity} and how it differs from our
analysis. Section \ref{secNote} presents notation, basic mathematical
results, and assumptions. Section \ref{secAlgo} introduces the projective
splitting framework under study, along with some associated assumptions. Section \ref{secOld} recalls some important lemmas from
\cite{johnstone2018projective}. Section \ref{secNew} proves some new lemmas
necessary for convergence rate results, including the key summability lemma
(Lemma \ref{lemSum}). Section \ref{secFunc} derives the ergodic $\bigO(1/k)$
function value convergence rate for \eqref{ProbOpt0}. Section \ref{secStrong}
derives strong convergence and convergence rates under strong monotonicity.
Section \ref{secLin} establishes linear convergence under strong monotonicity
and cocoercivity. Finally, Section \ref{secnis1} discusses special cases of
projective splitting when $n=1$.

\vspace{0.2cm}   
\noindent 
{\bf Comparison with \cite{machado2017complexity}}
\vspace{0.2cm}

\noindent 
To the best of our knowledge, the only works attempting to quantify convergence
rates of projective splitting are \cite{machado2016projective} and
\cite{machado2017complexity}, two works by the same author.  The analysis in 
\cite{machado2016projective} concerns a dual application of
projective splitting and its convergence rate results are similar to those
in~\cite{machado2017complexity}.
In these works, convergence rates are not defined in the more customary way they
are in this paper --- either in terms of the distance to the solution or the
gap between current function values and the optimal value of \eqref{ProbOpt0}.
Instead in \cite{machado2017complexity} they are defined in terms of an
approximate solution criterion for the monotone inclusion under study, specifically
\eqref{probCompMonoMany} with $n=2$. Without any enlargement being applied to the
operators, the approximate solution condition is as follows:  a point $(x,y)\in
\calH^2$ is said to be an $\epsilon$-approximate solution
of~\eqref{probCompMonoMany} with $n=2$ if there exists $(a,b)\in\calH^2$ s.t.
$a\in T_1 x$, $b\in T_2 y$ and
$
\max\{\|a+b\|,\|x-y\|\}\leq\epsilon.
$
If this condition holds with $\epsilon=0$, then $x=y$ is a solution
to~\eqref{probCompMonoMany}. The iteration complexity of a method is then
defined in terms of the number iterations required to produce a point $(x^k,y^k)$
which is a $\epsilon$-approximate solution in this sense.

We stress that this notion of iteration complexity/convergence rate is
different to what we use here. Instead, we directly analyze the distance of
the points generated by the algorithm to a solution of
\eqref{probCompMonoMany}, that is,  we study the behavior of $\|z^k - z^*\|$,
where $z^*$  solves
\eqref{probCompMonoMany}. Or for the special case of \eqref{ProbOpt0}, we
consider the convergence rate of $F(z^k)-F^*$ to zero, where $F^*$ is
the optimal value. 


\section{Mathematical Preliminaries}\label{secNote}
\subsection{Notation, Terminology, and Basic Lemmas} \label{subsecNote}
Throughout this paper we will use the standard convention that a sum over an
empty set of indices, such as $\sum_{i=1}^{n-1}a_i$ with $n=1$, is taken to be
$0$.
%
%
A single-valued operator $A: \calH \rightarrow \calH$ is called \emph{cocoercive} if, for some $\Gamma>0$,
\[
(\forall\,u,v\in\calH) \qquad
 \langle u - v,A(u) - A(v)\rangle \geq \Gamma^{-1} \|A(u) - A(v)\|^2_2.
\]
Every cocoercive operator is $\Gamma$-Lipschitz
continuous, but not \emph{vice versa}.
%
For any maximal monotone operator $A : \calH \rightarrow 2^\calH$ and scalar
$\rho > 0$ we will use the notation
$
	\prox_{\rho A} = (I+\rho A)^{-1}
$
to denote the \emph{proximal operator}, 
also known as the backward or implicit step with respect to $A$.  In particular,
\begin{eqnarray}\label{defprox2}
x = \prox_{\rho A}(a) \quad\implies\quad \exists y\in Ax:x+\rho y = a,
\end{eqnarray}
and the $x$ and $y$ satisfying this relation are unique. 
Furthermore, $\prox_{\rho A}$ is defined everywhere and
$\range(\prox_A) = \text{dom}(A)$ \cite[Proposition 23.2]{bauschke2011convex}. 
In the special case where $A = \partial f$ for some convex, closed, and
proper function $f$, the proximal operator may be written as
\begin{align}
\label{defProxOpt}
\prox_{\rho \partial f}
=
\argmin_x\left\{\frac{1}{2}\|x-a\|^2 + \rho f(x)\right\}.
\end{align}
In the optimization context, we will use the notational convention that
$\prox_{\rho\partial f} = \prox_{\rho f}$.
%
%
Finally, we will use the following two standard results:
\begin{lemma}\label{lemBasic}
	For any vectors $v_1,\ldots,v_n\in\calH^n$, 
	$
	\left\|\sum_{i=1}^n v_i\right\|^2\leq n\sum_{i=1}^n\left\|v_i\right\|^2.
	$
\end{lemma}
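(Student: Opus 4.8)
The plan is to exploit the convexity of the squared-norm functional $x\mapsto\|x\|^2$ on $\calH$. First I would write $\sum_{i=1}^n v_i = n\cdot\big(\tfrac1n\sum_{i=1}^n v_i\big)$, so that $\big\|\sum_{i=1}^n v_i\big\|^2 = n^2\big\|\tfrac1n\sum_{i=1}^n v_i\big\|^2$. Since $\|\cdot\|^2$ is convex and the coefficients $1/n$ form a convex combination, Jensen's inequality gives $\big\|\tfrac1n\sum_{i=1}^n v_i\big\|^2 \le \tfrac1n\sum_{i=1}^n\|v_i\|^2$. Multiplying through by $n^2$ yields $\big\|\sum_{i=1}^n v_i\big\|^2 \le n\sum_{i=1}^n\|v_i\|^2$, which is the claim.

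An equivalent elementary route, should one prefer to avoid invoking Jensen's inequality, is to expand the inner product directly: $\big\|\sum_{i=1}^n v_i\big\|^2 = \sum_{i=1}^n\sum_{j=1}^n\langle v_i,v_j\rangle$, then bound each cross term via $\langle v_i,v_j\rangle \le \tfrac12(\|v_i\|^2+\|v_j\|^2)$ (the arithmetic–geometric mean inequality, valid in any inner-product space), and observe that summing $\tfrac12(\|v_i\|^2+\|v_j\|^2)$ over all $n^2$ ordered pairs $(i,j)$ gives $n\sum_{i=1}^n\|v_i\|^2$ by symmetry. A third option is to combine the triangle inequality with the discrete Cauchy–Schwarz inequality, $\big\|\sum_{i=1}^n v_i\big\| \le \sum_{i=1}^n\|v_i\| \le \sqrt{n}\,\big(\sum_{i=1}^n\|v_i\|^2\big)^{1/2}$, and then square.

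Since this is a standard inequality, I do not anticipate any genuine obstacle; the proof uses only the inner-product structure of $\calH$ and no appeal to completeness is needed. The only point worth a moment's care is the degenerate bookkeeping at small $n$: for $n=1$ the inequality is an equality, and for $n=0$ it is trivial under the empty-sum convention recalled at the start of Section~\ref{secNote}. I would present the convexity argument as the main proof for brevity and leave the alternatives unstated.
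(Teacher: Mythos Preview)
Your proposal is correct; each of the three routes you outline (convexity/Jensen, expansion plus the elementary bound $\langle v_i,v_j\rangle\le\tfrac12(\|v_i\|^2+\|v_j\|^2)$, or triangle inequality followed by Cauchy--Schwarz in $\mathbb{R}^n$) is a complete proof of the inequality. The paper itself does not supply a proof at all: Lemma~\ref{lemBasic} is introduced as one of ``two standard results'' and stated without argument, so there is nothing to compare against beyond noting that your write-up fills a gap the authors deliberately left.
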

\begin{lemma}
	For any $x,y,z\in\calH$
\begin{align}
\label{eqCosine}
2\langle x-y,x-z\rangle = 
\|x-y\|^2+\|x-z\|^2 - \|y-z\|^2.
\end{align}
\end{lemma}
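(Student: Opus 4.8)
The plan is to verify the identity by direct expansion, using only bilinearity and symmetry of the inner product on $\calH$ together with $\|u\|^2=\langle u,u\rangle$. First I would expand the left-hand side:
\[
2\langle x-y,\,x-z\rangle = 2\langle x,x\rangle - 2\langle x,z\rangle - 2\langle y,x\rangle + 2\langle y,z\rangle = 2\|x\|^2 - 2\langle x,y\rangle - 2\langle x,z\rangle + 2\langle y,z\rangle,
\]
where symmetry of the inner product is used to identify $\langle y,x\rangle$ with $\langle x,y\rangle$.

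Next I would expand each term on the right-hand side: $\|x-y\|^2 = \|x\|^2 - 2\langle x,y\rangle + \|y\|^2$, $\|x-z\|^2 = \|x\|^2 - 2\langle x,z\rangle + \|z\|^2$, and $\|y-z\|^2 = \|y\|^2 - 2\langle y,z\rangle + \|z\|^2$. Adding the first two expressions and subtracting the third, the terms $\|y\|^2$ and $\|z\|^2$ cancel, leaving $2\|x\|^2 - 2\langle x,y\rangle - 2\langle x,z\rangle + 2\langle y,z\rangle$, which coincides with the expansion of the left-hand side. Since $x,y,z\in\calH$ were arbitrary, the claim follows.

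A slightly slicker route I would mention is to apply the elementary polarization identity $2\langle a,b\rangle = \|a\|^2 + \|b\|^2 - \|a-b\|^2$ with $a = x-y$ and $b = x-z$; then $a-b = z-y$, so $\|a-b\|^2 = \|y-z\|^2$, and the identity drops out immediately. Either way there is no substantive obstacle: the only thing to watch is the sign bookkeeping when combining the three expanded norms, and the statement is simply the standard law-of-cosines (parallelogram/polarization) identity valid in any real inner product space.
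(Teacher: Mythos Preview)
Your proof is correct; the paper itself does not supply a proof of this lemma, treating it as a standard identity, so your direct expansion (or the polarization shortcut) is entirely appropriate and matches what any reader would fill in.
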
 
\noindent We will use a boldface $\bw = (w_1,\ldots,w_{n-1})$ for elements 
of $\calH^{n-1}$.

\subsection{Main Assumptions Regarding Problem (\ref{probCompMonoMany})}\label{secMainAss}
Define the \emph{extended solution set} or \emph{Kuhn-Tucker set}
of~\eqref{probCompMonoMany} to be
\begin{equation} \label{defCompExtSol}
\calS
=  \Big\{ (z,w_1,\ldots,w_{n-1}) \in \calH^n \;\; \Big| \;\;
w_i\in T_i z,\,\, i=1,\ldots,n-1, \;\;
-\textstyle{\sum_{i=1}^{n-1}  w_i} \in T_n z \Big\}.
\end{equation}
Clearly $z\in\mathcal{H}$ solves~\eqref{probCompMonoMany} if and only if
there exists $\bw\in\calH^{n-1}$ such that
$(z,\bw)\in\calS$. 
Our main assumptions regarding~\eqref{probCompMonoMany} are
as follows:
\begin{assumption}
	\label{AssMonoProb}\label{assMono}	
	$\mathcal{H}$ is a real Hilbert space and
	problem~\eqref{probCompMonoMany} conforms to the following:
	\begin{enumerate}
	\item For $i=1,\ldots,n$, the operators
	$T_i:\mathcal{H}\to2^{\mathcal{H}}$ are monotone. 
	\item For all $i$ in some subset $\Iforw \subseteq \{1,\ldots,n\}$,
    the operator $T_i$ is
	$L_i$-Lipschitz continuous (and thus single-valued)
	and $\text{dom}(T_i) = \mathcal{H}$.  
	\item For $i \in
	\Iback \triangleq \{1,\ldots,n\} \backslash \Iforw$, the operator 
	$T_i$ is maximal and that the map $\prox_{\rho
	T_i}:\mathcal{H}\to\mathcal{H}$ can be computed 
to within the error tolerance specified below in Assumption \ref{assErr} 
	(however, these operators are not precluded 
	from also being Lipschitz continuous).
	\item The solution set $\calS$ defined in
	(\ref{defCompExtSol}) is nonempty.
	\end{enumerate}
\end{assumption}

\begin{proposition} \emph{\cite[Lemma~3]{johnstone2018projective}} 
\label{prop:SClosedConv}
Under Assumption~\ref{assMono}, $\calS$ from~\eqref{defCompExtSol}
is closed and convex.
\end{proposition}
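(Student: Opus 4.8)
The plan is to establish the two claims—closedness and convexity—separately, working directly from the definition~\eqref{defCompExtSol} of $\calS$. Write a generic element of $\calS$ as $p = (z, w_1, \ldots, w_{n-1}) \in \calH^n$, and recall that membership in $\calS$ amounts to the $n$ conditions $w_i \in T_i z$ for $i = 1, \ldots, n-1$ together with $-\sum_{i=1}^{n-1} w_i \in T_n z$. Each of these is a statement that a certain point lies in the graph of one of the $T_i$, so the natural strategy is to exhibit $\calS$ as an intersection of preimages of the graphs $\gra T_i$ under suitable linear maps, and then to invoke maximal monotonicity of the $T_i$ (for $i \in \Iback$) and Lipschitz continuity with full domain (for $i \in \Iforw$) to get closedness, and monotonicity of all the $T_i$ to get convexity.

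For \emph{convexity}: take $p = (z,\bw)$ and $p' = (z',\bw')$ in $\calS$ and $\lambda \in [0,1]$, and set $p'' = \lambda p + (1-\lambda) p' = (z'', \bw'')$. I would verify $w_i'' \in T_i z''$ for each $i \le n-1$ and $-\sum_i w_i'' \in T_n z''$ directly. The cleanest route is to use monotonicity of $T_i$ in the form of the graph inequality: for any $(a, u), (b, v) \in \gra T_i$ we have $\langle a - b, u - v\rangle \ge 0$, and one checks that $(z'', w_i'')$ satisfies $\langle z'' - c, w_i'' - d \rangle \ge 0$ for every $(c,d) \in \gra T_i$, because this inner product is a convex combination (with the cross terms controlled) of the nonnegative quantities coming from $(z, w_i)$ and $(z', w_i')$ — more precisely, expand $\langle \lambda z + (1-\lambda) z' - c,\ \lambda w_i + (1-\lambda) w_i' - d\rangle$ and group it as $\lambda \langle z - c, w_i - d\rangle + (1-\lambda)\langle z' - c, w_i' - d\rangle + \lambda(1-\lambda)\langle z - z', w_i - w_i'\rangle$, the last term being $\ge 0$ by monotonicity applied to $(z,w_i),(z',w_i') \in \gra T_i$. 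For $i \in \Iback$, maximality of $T_i$ then upgrades this ``monotonically related to the whole graph'' property to actual membership $w_i'' \in T_i z''$; for $i \in \Iforw$, $T_i$ is single-valued and continuous with full domain, so $w_i'' = T_i z''$ follows immediately from $w_i = T_i z$, $w_i' = T_i z'$ and affinity (wait — $T_i$ need not be affine, but it need not matter: for the $\Iforw$ indices we should instead argue via the same maximal-monotone trick, or simply note Lipschitz monotone operators on all of $\calH$ are maximal monotone, so the uniform argument applies). The condition on $T_n$ is handled the same way, using that $w'' := -\sum_{i=1}^{n-1} w_i'' = \lambda(-\sum w_i) + (1-\lambda)(-\sum w_i')$ is the corresponding convex combination.

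For \emph{closedness}: let $p^k = (z^k, \bw^k) \in \calS$ with $p^k \to p = (z, \bw)$ in $\calH^n$ (norm convergence). For each $i \in \Iback$, we have $(z^k, w_i^k) \in \gra T_i$ and $(z^k, w_i^k) \to (z, w_i)$ strongly; since $T_i$ is maximal monotone its graph is closed (indeed strong-strong closed — this is one of the basic facts in \cite{bauschke2011convex}), so $w_i \in T_i z$. For $i \in \Iforw$, $T_i$ is $L_i$-Lipschitz with full domain, hence continuous, so $w_i^k = T_i z^k \to T_i z$, giving $w_i = T_i z$. Finally $-\sum_{i=1}^{n-1} w_i^k \to -\sum_{i=1}^{n-1} w_i$, and applying graph-closedness of $T_n$ (again maximal monotone, or Lipschitz) with $(z^k, -\sum_i w_i^k) \in \gra T_n$ yields $-\sum_{i=1}^{n-1} w_i \in T_n z$. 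Hence $p \in \calS$.

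I expect the only real subtlety — the ``main obstacle'' — to be bookkeeping around the two regimes $\Iforw$ and $\Iback$ and making sure the graph-closedness / maximality facts are applied to the right operators; in particular, one should note that an operator that is monotone, single-valued, Lipschitz, and everywhere-defined is automatically maximal monotone (\emph{e.g.}~\cite[Corollary 20.28]{bauschke2011convex} or the classical Minty-type argument), so a single unified argument using ``$T_i$ maximal monotone, graph closed and convex-stable in the above sense'' covers all $i = 1, \ldots, n$ and both claims collapse to the one computation above. Since this is exactly \cite[Lemma~3]{johnstone2018projective}, the cleanest writeup is to simply cite that result; the sketch above is what one would write if an inline proof were wanted.
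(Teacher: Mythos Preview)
Your closedness argument is fine, but the convexity argument contains a genuine error. The algebraic identity is wrong: the correct expansion is
\[
\langle z'' - c,\; w_i'' - d\rangle
\;=\;
\lambda\langle z - c,\, w_i - d\rangle
+ (1-\lambda)\langle z' - c,\, w_i' - d\rangle
\;-\;
\lambda(1-\lambda)\,\langle z - z',\, w_i - w_i'\rangle,
\]
with a \emph{minus} sign on the cross term. With the correct sign, monotonicity of $T_i$ makes that last term nonpositive, not nonnegative, and the argument collapses. Indeed, your argument as written never uses anything about $\calS$ beyond the single inclusion $w_i\in T_i z$, so it would prove that the graph of \emph{any} maximal monotone operator is convex --- which is false (take $T(x)=x^3$ on $\mathbb{R}$: $(0,0)$ and $(1,1)$ lie in the graph but $(\tfrac12,\tfrac12)$ does not).

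The repair is that convexity of $\calS$ is a \emph{global} fact requiring all $n$ graph conditions simultaneously. For $(z,\bw),(z',\bw')\in\calS$, monotonicity of $T_i$ gives $\langle z-z',\, w_i-w_i'\rangle\ge 0$ for $i=1,\ldots,n-1$, while monotonicity of $T_n$ applied to the pairs $(z,-\sum_i w_i)$ and $(z',-\sum_i w_i')$ gives $\sum_{i=1}^{n-1}\langle z-z',\, w_i-w_i'\rangle\le 0$. Together these force $\langle z-z',\, w_i-w_i'\rangle=0$ for every $i$ (and the analogous equality for the $T_n$ slot). With the cross terms known to vanish, your maximality step then goes through verbatim for each operator. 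The paper itself gives no proof here, simply deferring to \cite[Lemma~3]{johnstone2018projective}, but this interaction among the operators is the ingredient your sketch is missing.
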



\section{The Algorithm}
\label{secAlgo}

Projective splitting is a special case of a general seperator-projector method
for finding a point in a closed and convex set. At each iteration the method
constructs an affine function $\varphi_k:\calH^n\to \mathbb{R}$ which
separates the current point from the target set $\calS$ defined in
\eqref{defCompExtSol}. In other words, if $p^k$ is the current point in
$\calH^n$ generated by the algorithm, $\varphi_k(p^k)>0$, and
$\varphi_k(p)\leq 0$ for all $p\in\calS$. The next point is then the
projection of $p^k$ onto the hyperplane $\{p:\varphi_k(p)=0\}$, subject to a
relaxation factor $\beta_k$. What makes projective splitting an operator
splitting method is that the hyperplane is constructed through individual
calculations on each operator $T_i$, either $\prox$ calculations or forward
steps.

\subsection{The Hyperplane}\label{secTecLems}\label{secHplane}
Let $p =(z,\bw) =  (z,w_1,\ldots, w_{n-1})$ be a
generic point in $\calH^n$. For $\calH^n$, we
adopt the following norm and inner product for some $\gamma>0$:
\begin{align} \label{gammanorm}
\norm{(z,\bw)}^2 &= \gamma\|z\|^2 + \sum_{i=1}^{n-1}\|w_i\|^2 &
\Inner{(z^1,\bw^1)}{(z^2,\bw^2)}_\gamma &= 
\gamma\langle z^1,z^2\rangle + \sum_{i=1}^{n-1}\langle
w^1_i,w^2_i\rangle.
\end{align}
Define the following function
for all $k\geq 1$:
\begin{eqnarray}\label{hyper}\label{hplane}\label{defHyper}
\varphi_k(p)
&=&
\sum_{i=1}^{n-1}
\left
\langle  z - x_i^k,y_i^k - w_i
\right
\rangle
+
\left\langle
z-x_{n}^k,
y_{n}^k
+
\sum_{i=1}^{n-1}  w_i
\right\rangle.
\label{defCompHplane}
\end{eqnarray}
where the $(x_i^k,y_i^k)$ are chosen so that $y_i^k\in T_i x_i^k$ for
$i=1,\ldots,n$.
This function is a special case of the separator function used
in~\cite{combettes2016async}.  The following lemma proves some basic
properties  of $\varphi_k$; similar results are
in~\cite{alotaibi2014solving,combettes2016async,eckstein2017simplified} in the
case $\gamma=1$.

\begin{lemma}\label{LemGradAffine} 
\emph{\cite[Lemma 4]{johnstone2018projective}} Let $\varphi_k$ be defined as in (\ref{hplane}).  Then:
	\begin{enumerate}
		\item $\varphi_k$ is affine on $\calH^n$. 
		\item With respect to inner product~\eqref{gammanorm} defined
		on $\calH^n$, the gradient
		of $\varphi_k$ is
		\begin{align}\label{defGrad}
		\nabla\varphi_k =
		\left(\frac{1}{\gamma}\left(\sum_{i=1}^{n}y_i^k\right) \!\!,\;
		x_1^k -  x_{n}^k,x_2^k - x_{n}^k,\ldots,x_{n-1}^k -  x_{n}^k\right).		
		\end{align}
		\item 
		Suppose Assumption \ref{AssMonoProb} holds and 
		that $y_i^k\in T_i x_i^k$ for $i=1,\ldots,n$. Then
		$\varphi_k(p)\leq 0$ for all $p\in \calS$ defined in (\ref{defCompExtSol}).
		\item \label{lem0grad}
		If Assumption \ref{AssMonoProb} holds, $y_i^k\in T_i x_i^k$, and $\nabla\varphi_k =  0$, then 
		$
		(x_n^k,y_1^k,\ldots,y_{n-1}^k)\in\calS.
		$
	\end{enumerate}
\end{lemma}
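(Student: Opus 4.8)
The plan is to treat the four parts in order, all of which follow from first expanding $\varphi_k$ and cancelling the bilinear terms in $(z,\bw)$. First I would expand each summand $\langle z-x_i^k,\,y_i^k-w_i\rangle = \langle z,y_i^k\rangle - \langle z,w_i\rangle - \langle x_i^k,y_i^k\rangle + \langle x_i^k,w_i\rangle$ for $i=1,\dots,n-1$, and similarly expand the final term $\langle z-x_n^k,\,y_n^k+\sum_{i=1}^{n-1}w_i\rangle$. The key observation is that the genuinely bilinear contributions, namely $-\sum_{i=1}^{n-1}\langle z,w_i\rangle$ from the first $n-1$ terms and $+\langle z,\sum_{i=1}^{n-1}w_i\rangle$ from the last term, cancel exactly. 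After this cancellation one is left with
\[
\varphi_k(p) \;=\; \Big\langle z,\ \textstyle\sum_{i=1}^{n}y_i^k\Big\rangle \;+\; \sum_{i=1}^{n-1}\big\langle x_i^k - x_n^k,\ w_i\big\rangle \;+\; c_k,
\]
where $c_k = -\sum_{i=1}^{n-1}\langle x_i^k,y_i^k\rangle - \langle x_n^k,y_n^k\rangle$ does not depend on $p$. This immediately gives part 1: $\varphi_k$ is the sum of a linear functional of $(z,\bw)$ and a constant.

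For part 2, I would identify the gradient of this affine function with respect to the weighted inner product $\langle\cdot,\cdot\rangle_\gamma$ in~\eqref{gammanorm} via Riesz representation: we seek $g=(g_z,g_{w_1},\dots,g_{w_{n-1}})$ with $\gamma\langle g_z,z\rangle + \sum_{i=1}^{n-1}\langle g_{w_i},w_i\rangle$ equal to the linear part of $\varphi_k$ above, for all $(z,\bw)$. Matching the $z$ and $w_i$ components gives $g_z = \frac{1}{\gamma}\sum_{i=1}^{n}y_i^k$ and $g_{w_i} = x_i^k - x_n^k$, which is exactly~\eqref{defGrad}.

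For part 3, let $p=(z,w_1,\dots,w_{n-1})\in\calS$, so $w_i\in T_i z$ for $i=1,\dots,n-1$ and $-\sum_{i=1}^{n-1}w_i\in T_n z$; recall $y_i^k\in T_i x_i^k$. Monotonicity of $T_i$ applied to the pairs $(z,w_i)$ and $(x_i^k,y_i^k)$ gives $\langle z-x_i^k,\,w_i-y_i^k\rangle\geq 0$, i.e.\ $\langle z-x_i^k,\,y_i^k-w_i\rangle\leq 0$, for each $i=1,\dots,n-1$; monotonicity of $T_n$ applied to $(z,-\sum_{i=1}^{n-1}w_i)$ and $(x_n^k,y_n^k)$ gives $\langle z-x_n^k,\,y_n^k+\sum_{i=1}^{n-1}w_i\rangle\leq 0$. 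Summing these $n$ inequalities yields $\varphi_k(p)\leq 0$. For part 4, setting $\nabla\varphi_k = 0$ in~\eqref{defGrad} forces $\sum_{i=1}^{n}y_i^k = 0$ and $x_i^k = x_n^k$ for $i=1,\dots,n-1$. Hence $y_i^k\in T_i x_i^k = T_i x_n^k$ for $i=1,\dots,n-1$, and $-\sum_{i=1}^{n-1}y_i^k = y_n^k\in T_n x_n^k$; comparing with~\eqref{defCompExtSol} shows $(x_n^k,y_1^k,\dots,y_{n-1}^k)\in\calS$.

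There is no serious obstacle here; the only point requiring care is the bookkeeping in the expansion—specifically verifying that the bilinear-in-$(z,\bw)$ terms cancel so that $\varphi_k$ is affine—and tracking signs correctly when invoking monotonicity (the operator $T_n$ is evaluated against $-\sum_{i=1}^{n-1}w_i$, not $\sum_{i=1}^{n-1}w_i$). The inner-product-weighted gradient computation in part 2 is routine once one recalls that the Riesz representative depends on the choice of $\gamma$.
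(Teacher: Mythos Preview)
Your proof is correct in all four parts; the expansion, cancellation of the bilinear terms, Riesz identification of the gradient under $\langle\cdot,\cdot\rangle_\gamma$, and the monotonicity argument are exactly the standard route. The paper itself does not prove this lemma but cites \cite[Lemma~4]{johnstone2018projective}, so there is nothing further to compare against here.
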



\subsection{Projective Splitting} 

Algorithm~\ref{AlgfullyAsync} is the projective splitting framework for which
we will derive convergence rates. It is a special case of the framework
of~\cite{johnstone2018projective} without asynchrony or block-iterative
features. In particular, we assume at each iteration that the method processes
every operator $T_i$, using the most up-to-date information possible.
The frameworks of
\cite{combettes2016async,eckstein2017simplified,johnstone2018projective} also
allow for the monotone operators to be composed with linear operators. As
mentioned in the introduction, our analysis may be extended to this situation, but
for the sake of readibility we will not do so here.

Algorithm \ref{AlgfullyAsync} is a special case of the separator-projector
algorithm applied to finding a point in $\calS$ using the affine function
$\varphi_k$ defined in \eqref{hyper} \cite[Lemma 6]{johnstone2018projective}.
For the variables of Algorithm \ref{AlgfullyAsync} defined on lines
\ref{eqAlgproj1}--\ref{eqAlgproj2}, define $p^k = (z^k,\bw^k)\transpose =
(z^k,w_1^k,\ldots,w_{n-1}^k)\transpose$ for all $k\geq 1$. The points
$(x_i^k,y_i^k)\in\gra T_i$ are chosen so that $\varphi_k(p^k)$ is sufficiently
large to guarantee the weak convergence of $p^k$ to a solution
\cite[Theorem~1]{johnstone2018projective}. For $i\in\Iback$, a single $\prox$
calculation is required, whereas for $i\in\Iforw$, two forward steps are
required per iteration.

The algorithm has the following parameters:
\begin{itemize}
	\item For each $k\geq 1$ and $i=1,\ldots,n$, a positive scalar 
	stepsize $\rho_{i}^{k}$.
	\item For each $k\geq 1$, an overrelaxation parameter
	$\beta_k\in[\underline{\beta},\overline{\beta}]$ where
	$0<\underline{\beta}\leq\overline{\beta}<2$.
	\item The fixed scalar $\gamma>0$ from~\eqref{gammanorm}, which controls
	the relative emphasis on the primal and dual variables in the projection
	update in lines \ref{eqAlgproj1}-\ref{eqAlgproj2}.	
	\item Sequences of errors $\{e_i^k\}_{k\geq 1}$ for $i\in\Iback$, modeling
	inexact computation of the proximal steps.
\end{itemize}

To ease the mathematical presentation, we use the following notation in
Algorithm \ref{AlgfullyAsync} and throughout the rest of the paper:
\begin{align}
\label{defWnm}\
(\forall\,k\in\mathbb{N}) \quad\quad w_{n}^k &\triangleq -\sum_{i=1}^{n-1} w_i^k.
\end{align}
Note that when $n=1$, we have $w_n^k = 0$ by the convention at the start of
Section~\ref{subsecNote}.

\begin{algorithm}[t]
	\DontPrintSemicolon
	\SetKwInOut{Input}{Input}
	\Input{$(z^1,{\bf w}^1)\in \calH^n$, $\gamma>0$.}
	\For{$k=1,2,\ldots$}
	{   
		\For{$i=1,2,\ldots, n$}
		{
			
			\If{$i\in\Iback$}
			{
				$a =  z^{k}+\rho_{i}^{k} w_i^{k}+e_i^k$\label{lineaupdate}\;
				$x_i^k = \prox_{\rho_{i}^{k} T_i}(a)
				$\label{LinebackwardUpdate}\;
				$
				y_i^k = (\rho_{i}^{k})^{-1}
				\left(
				a - x_i^k
				\right)
				$\label{lineBackwardUpdateY}\;

			}
			\Else
			{\label{LineForwardUpdate}
				$					
				x_i^k =  z^{k}-\rho_{i}^{k}
				( T_i  z^{k} - w_{i}^{k}),
				$\label{ForwardxUpdate}\;            	
				$
				y_i^k = T_i x_i^k.
				$
				\label{ForwardyUpdate}\;
				
			}
			
		}

	   $u_i^k = x_i^k -  x_n^k,\quad i=1,\ldots,n-1,$\label{lineCoordStart}\;
		$v^k = \sum_{i=1}^{n}  y_i^k$\label{lineVupdate}\;    
		$\pi_k = \|u^k\|^2+\gamma^{-1}\|v^k\|^2$  \label{linePiUpdate}\;    
		\eIf{$\pi_k>0$}{
			\label{lineHplane} 
			$\varphi_k(p_k) = 
			\langle z^k, v^k\rangle 
			+
			\sum_{i=1}^{n-1}
			\langle w_i^k,u_{i}^k\rangle 
			-
			\sum_{i=1}^{n}
			\langle x_i^k,y_i^k\rangle  
			$\label{lineComputeHplane}\;
			$\alpha_k = {\beta_k\varphi_k(p_k)}/{\pi_k}
			$\label{linealpha}\;
		}
		{
			\Return $z^{k+1}\leftarrow x_n^k, w_1^{k+1}\leftarrow y_1^k,\ldots,w_{n-1}^{k+1}\leftarrow y_{n-1}^k$\label{lineReturn}
		} 
		$z^{k+1} = z^k - \gamma^{-1}\alpha_k v^k$\label{eqAlgproj1} \;
		$w_i^{k+1} = w_i^k - \alpha_k u_{i}^k,\quad i=1,\ldots,n-1$,\label{eqAlgproj2} \;
		$w_{n}^{k+1} = -\sum_{i=1}^{n-1}  w_{i}^{k+1}$\label{lineCoordEnd}\;
	}
	\caption{Algorithm for solving~\eqref{probCompMonoMany}.}
	\label{AlgfullyAsync}
\end{algorithm}

\subsection{Conditions on the Errors and the Stepsizes}
We now state our assumptions regarding the computational errors and stepsizes
in Algorithm~\ref{AlgfullyAsync}. Assumptions \eqref{err1} and \eqref{err2}
are taken exactly from \cite{eckstein2017simplified}. Assumption \eqref{err3}
is new and necessary to derive our convergence rate results. Throughout the
rest of the manuscript, let $\bar{K}$ be the iteration where Algorithm
\ref{AlgfullyAsync} terminates via Line \ref{lineReturn}, with
$\bar{K}=\infty$ if the method runs indefinitely.

\begin{assumption}\label{assErr}
For some $\sigma \in[0,1[$ 
	and $\delta\geq 0$, the following hold for all
	$1\leq k \leq\bar{K}$ and $i\in\Iback$:
\begin{align}
	\langle  z^{k} - x_i^k,e_i^{k}\rangle
	&\geq
	-\sigma\| z^{k} - x_i^k\|^2
	\label{err1}\\
	\langle e_i^{k},y_i^k - w_i^{k}\rangle 
	&\leq \rho^{k}_i\sigma \|y_i^k - w_i^{k}\|^2
	\label{err2}
	\\\label{err3}
\|e_i^{k}\|^2 &\leq \delta\| z^{k} - x_i^k\|^2.
\end{align}
\end{assumption}
\noindent Note that if \eqref{err3} holds with $\delta<1$, then 
\eqref{err1} holds for any $\sigma \in [\delta,1[$.  For each $i\in\Iback$,
we will show in~\eqref{eqLast1} below that the sequence $\{\| z^{k} - x_i^k\|
\}$ is square-summable, so an eventual consequence of
\eqref{err3} will be that the corresponding error sequence must be
square-summable, that is,
$
\sum_{k=1}^{\bar{K}}\|e_i^k\|^2<\infty.
$

\begin{assumption}\label{assStep}
The stepsizes satisfy
\begin{align}
\underline{\rho} &\triangleq 
    \min_{i=1,\ldots,n} \left\{ \inf_{1\leq k\leq\bar{K}} \rho_i^k\right\} > 0 
\label{steplow}\\\label{stephigh}
(\forall i\in\Iback)\quad 
\overline{\rho}_i &\triangleq
    \sup_{1\leq k\leq\bar{K}} \rho_i^k  < \infty 
    \\
\label{stepforlow}
(\forall\,i\in\Iforw)\quad
\overline{\rho}_i &\triangleq 
\sup_{1\leq k\leq\bar{K}} \rho_i^k < \frac{1}{L_i}.
\end{align}
\end{assumption}
\noindent From here on, we let 
$\orho = \max_{i\in\Iback}\orho_i$ and $\bar{L} = \max_{i\in\Iforw} L_i$ with the convention that $\orho = 0$ if $\Iback = \{\emptyset\}$ and $\bar{L} = 0$  if $\Iforw = \{\emptyset\}$.  

The recent work \cite{johnstone2018projective} includes several extensions to
the basic forward-step stepsize constraint \eqref{stepforlow}, under which one
still obtains weak convergence of the iterates to a solution.  Section 4.1 of
\cite{johnstone2018projective} presents a backtracking linesearch that may be
used when the constant $L_i$ is unknown. Section 4.2 of
\cite{johnstone2018projective}  presents a backtrack-free
adaptive stepsize for the special case in which $T_i$ is affine but $L_i$ is
unknown. Our convergence rate analysis holds for these extensions, but for the
sake of readability we omit the details.

\newcommand{\bcalH}{\boldsymbol{\mathcal{H}}}
\section{Results Similar to \cite{johnstone2018projective}}
\label{secOld}

\begin{lemma} 
Suppose Assumption \ref{assMono} holds. In the context of Algorithm \ref{AlgfullyAsync},
recall the notation $p^k = (z^k,w_1^k,\ldots,w_{n-1}^k)\transpose$. For all $1\leq
k<\bar{K}$:
	\begin{enumerate}
		\item The iterates can be written as
		\begin{align}\label{eqUpds}
		p^{k+1} = p^k - \alpha_k\nabla\varphi_k(p^k)
		\end{align}
		where $\alpha_k = {\beta_k\varphi_k(p^k)}/{\|\nabla\varphi_k\|}$
		and $\varphi_k$ is defined in \eqref{hyper}.
		\item For all $p^*=(z^*,w_1^*,\ldots,w_{n-1}^*)\in\calS$:
		\begin{align}\label{eqFejer}
		\|p^{k+1}-p^*\|^2\leq\|p^k - p^*\|^2 - \beta_k(2-\beta_k)\|p^{k+1}-p^k\|^2
		\end{align}
		which implies that
		\begin{align}\label{eqSums}
		\sum_{t=1}^k\|p^t - p^{t+1}\|^2 &\leq \tau\|p^1 - p^*\|^2,
		\quad\quad\quad\quad\quad\quad\quad\quad\quad
		\text{where~}
		&&
		\tau = \ubeta^{-1}(2-\obeta)^{-1} \\
		\|z^{k} - z^*\|&\leq\gamma^{-\frac{1}{2}}\|p^k - p^*\|\leq\gamma^{-\frac{1}{2}}\|p^1-p^*\|,
		\\\label{wbegin}
		\|w_i^{k} - w_i^*\|&\leq\|p^k - p^*\|\leq\|p^1-p^*\|, &&i=1,\ldots,n-1.
		\end{align} 
	\end{enumerate}\label{propFejer}
\end{lemma}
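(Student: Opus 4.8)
The plan is to recognize Lemma~\ref{propFejer} as an instance of the standard Fej\'er-monotonicity estimate for a relaxed separator--projector iteration, specialized to the affine function $\varphi_k$ through the gradient formula of Lemma~\ref{LemGradAffine}; in essence this is a restatement of results from \cite{johnstone2018projective}, so the work is mostly unpacking notation. I would split the argument according to the two parts of the lemma.

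\emph{Part~1 (the update is a gradient projection step).} First I would compute $\|\nabla\varphi_k\|^2$ in the $\gamma$-weighted inner product \eqref{gammanorm}. By \eqref{defGrad}, $\nabla\varphi_k = (\gamma^{-1}v^k, u_1^k,\dots,u_{n-1}^k)$ with $v^k = \sum_{i=1}^n y_i^k$ and $u_i^k = x_i^k - x_n^k$ (lines \ref{lineCoordStart}--\ref{lineVupdate}), so
\[
\|\nabla\varphi_k\|^2 = \gamma\cdot\gamma^{-2}\|v^k\|^2 + \sum_{i=1}^{n-1}\|u_i^k\|^2 = \gamma^{-1}\|v^k\|^2 + \|u^k\|^2 = \pi_k .
\]
Hence the scalar $\alpha_k$ on line~\ref{linealpha} equals $\beta_k\varphi_k(p^k)/\|\nabla\varphi_k\|^2$, and reading lines \ref{eqAlgproj1}--\ref{eqAlgproj2} componentwise against $p^k - \alpha_k\nabla\varphi_k$ gives \eqref{eqUpds}. (When $\pi_k = 0$ the algorithm terminates on line~\ref{lineReturn}, so for $k < \bar{K}$ we have $\pi_k = \|\nabla\varphi_k\|^2 > 0$.)

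\emph{Part~2 (Fej\'er inequality and its consequences).} Since $\varphi_k$ is affine with gradient $\nabla\varphi_k$ (Lemma~\ref{LemGradAffine}), for any $p$ we have $\varphi_k(p) = \varphi_k(p^k) + \langle\nabla\varphi_k, p - p^k\rangle$; taking $p = p^*\in\calS$ and using $\varphi_k(p^*)\le 0$ (Lemma~\ref{LemGradAffine}, part~3) gives $\langle\nabla\varphi_k, p^k - p^*\rangle \ge \varphi_k(p^k)$, and $\alpha_k\ge 0$ since $\varphi_k(p^k)\ge 0$ (the separator property built into the choice of $(x_i^k,y_i^k)$, cf.~\cite{johnstone2018projective}). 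Expanding the square via \eqref{eqUpds},
\[
\|p^{k+1}-p^*\|^2 = \|p^k-p^*\|^2 - 2\alpha_k\langle\nabla\varphi_k, p^k-p^*\rangle + \alpha_k^2\|\nabla\varphi_k\|^2 \le \|p^k-p^*\|^2 - 2\alpha_k\varphi_k(p^k) + \alpha_k^2\|\nabla\varphi_k\|^2 ,
\]
and substituting $\varphi_k(p^k) = \beta_k^{-1}\alpha_k\|\nabla\varphi_k\|^2$ along with $\alpha_k^2\|\nabla\varphi_k\|^2 = \|p^{k+1}-p^k\|^2$ and collecting terms yields \eqref{eqFejer}. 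Summing \eqref{eqFejer} over $t=1,\dots,k$ telescopes the right-hand side to at most $\|p^1-p^*\|^2$, and bounding the relaxation factor below using $\beta_t\in[\underline{\beta},\overline{\beta}]\subset(0,2)$ gives \eqref{eqSums}. Finally, \eqref{eqFejer} shows $\{\|p^k-p^*\|\}$ is nonincreasing, so the last two displays follow from the definition \eqref{gammanorm}, which gives $\gamma\|z^k-z^*\|^2\le\|p^k-p^*\|^2$ and $\|w_i^k-w_i^*\|^2\le\|p^k-p^*\|^2$.

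The argument is essentially routine; the only steps needing any care are the weighted-norm identity $\|\nabla\varphi_k\|^2 = \pi_k$ and keeping track of the precise relaxation constant as it passes through the expansion (i.e.\ whether it multiplies the full projection step or the relaxed increment $\|p^{k+1}-p^k\|^2$), together with the fact that $\varphi_k(p^k)\ge 0$, which is what makes $p^{k+1}$ a genuine relaxed projection toward $\calS$. Since Algorithm~\ref{AlgfullyAsync} is a special case of the separator--projector scheme of \cite{johnstone2018projective}, one could alternatively just invoke the corresponding lemma there and omit the computation.
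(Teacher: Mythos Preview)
Your approach is exactly the paper's: recognize Algorithm~\ref{AlgfullyAsync} as a relaxed separator--projector scheme and read off the Fej\'er inequality. The paper's own proof is in fact just a citation (to \cite[Lemma~6]{johnstone2018projective} and \cite[Proposition~1]{eckstein2008family}); you simply fill in the details, correctly identifying $\pi_k=\|\nabla\varphi_k\|^2$ in the $\gamma$-norm and expanding the square.

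One small point worth flagging: when you ``collect terms'' in Part~2, the coefficient you actually obtain on $\|p^{k+1}-p^k\|^2$ is $(2-\beta_k)/\beta_k$, not $\beta_k(2-\beta_k)$ as written in \eqref{eqFejer}. Indeed, substituting $\varphi_k(p^k)=\beta_k^{-1}\alpha_k\|\nabla\varphi_k\|^2$ and $\alpha_k^2\|\nabla\varphi_k\|^2=\|p^{k+1}-p^k\|^2$ gives
\[
-2\alpha_k\varphi_k(p^k)+\alpha_k^2\|\nabla\varphi_k\|^2
= \Big(1-\tfrac{2}{\beta_k}\Big)\|p^{k+1}-p^k\|^2
= -\tfrac{2-\beta_k}{\beta_k}\,\|p^{k+1}-p^k\|^2.
\]
The form $\beta_k(2-\beta_k)$ is the standard coefficient when the inequality is stated in terms of the \emph{unrelaxed} projection step $\|P_{H_k}p^k-p^k\|^2$; the paper appears to have conflated the two. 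This does not affect your argument for \eqref{eqSums} and the remaining bounds, which go through with the constant $\tau$ as stated (or a sharper one), and the monotonicity of $\|p^k-p^*\|$ is unaffected.
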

\begin{proof}
	The update \eqref{eqUpds} follows from algebraic manipulation of lines
	\ref{lineCoordStart}--\ref{lineCoordEnd} and consideration of
	\eqref{hplane} and \eqref{defGrad}.
	Inequalities \eqref{eqFejer} and \eqref{eqSums} result from 
Algorithm \ref{AlgfullyAsync} being a separator-projector
algorithm \cite[Lemma 6]{johnstone2018projective}. A specific reference
proving these results is \cite[Proposition 1]{eckstein2008family}.
%
%
\end{proof} 

The following lemma places an upper bound on the gradient of the affine function
$\varphi_k$ at each iteration. A similar result was proved in \cite[Lemma
11]{johnstone2018projective}, but since that result is slightly different, we
include the full proof here.


\begin{lemma}\label{boundedGradient}\label{lemBoundedGrad}
	Suppose assumptions~\ref{assMono},\ref{assErr}, and \ref{assStep} hold and recall the
	affine function $\varphi_k$ defined in~\eqref{hplane}.  For all $1\leq k\leq\bar{K}$,
$$
		\|\nabla\varphi_k\|^2
		\leq
\xi_1\sum_{i=1}^n\|z^k-x_i^k\|^2
$$
where 
\begin{align}\label{defxi1}
\xi_1 = 
2n\left[1+2\gamma^{-1}\left(\bar{L}^2|\Iforw|+\underline{\rho}^{-2}(1+\delta)\right)\right]<\infty.
\end{align} 
\end{lemma}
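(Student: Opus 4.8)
The plan is to bound $\|\nabla\varphi_k\|^2$ using the explicit formula for the gradient from Lemma~\ref{LemGradAffine}, namely
\[
\nabla\varphi_k = \left(\gamma^{-1}\textstyle\sum_{i=1}^n y_i^k,\; x_1^k-x_n^k,\ldots,x_{n-1}^k-x_n^k\right),
\]
and then expand using the norm \eqref{gammanorm}, which gives
\[
\|\nabla\varphi_k\|^2 = \gamma^{-1}\Bigl\|\textstyle\sum_{i=1}^n y_i^k\Bigr\|^2 + \sum_{i=1}^{n-1}\|x_i^k - x_n^k\|^2.
\]
The strategy for each of the two groups of terms is the same: rewrite everything in terms of the quantities $z^k - x_i^k$, and then use Lemma~\ref{lemBasic} (the Cauchy–Schwarz-type inequality $\|\sum v_i\|^2 \le n\sum\|v_i\|^2$) to collapse cross terms into a sum of squared norms.

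For the primal block $\sum_{i=1}^{n-1}\|x_i^k - x_n^k\|^2$, I would write $x_i^k - x_n^k = (x_i^k - z^k) + (z^k - x_n^k)$, apply the $n=2$ case of Lemma~\ref{lemBasic} to get $\|x_i^k-x_n^k\|^2 \le 2\|z^k-x_i^k\|^2 + 2\|z^k-x_n^k\|^2$, and sum over $i$; this produces a bound of the form $2\sum_{i=1}^{n}\|z^k-x_i^k\|^2$ up to constants, which is the shape we want. For the dual block $\gamma^{-1}\|\sum_{i=1}^n y_i^k\|^2$, the key is to control each $y_i^k$ in terms of $\|z^k - x_i^k\|$. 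Here the two cases $i\in\Iforw$ and $i\in\Iback$ must be handled separately. For $i\in\Iforw$, the forward step on line~\ref{ForwardxUpdate} gives $x_i^k = z^k - \rho_i^k(T_i z^k - w_i^k)$, and $y_i^k = T_i x_i^k$; since $T_i$ is $L_i$-Lipschitz, $\|y_i^k\| = \|T_i x_i^k\|$ can be related to $\|x_i^k - z^k\|$ — more precisely, I expect to use that $\|z^k - x_i^k\| = \rho_i^k\|T_i z^k - w_i^k\|$ together with a bound of the type $\|y_i^k\|\le L_i\|x_i^k - z^k\| + \|T_i z^k\|$; but this leaves a stray $\|T_iz^k\|$ term, so more care is needed. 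The cleaner route, which I believe is what the $\bar L^2|\Iforw|$ term in \eqref{defxi1} signals, is to bound $\|y_i^k\|^2 \le L_i^2\|x_i^k-z^k\|^2$ by exploiting the relation between $y_i^k$, $w_i^k$ and $z^k-x_i^k$ directly — $y_i^k - w_i^k$ or a similar combination is Lipschitz-controlled by $z^k - x_i^k$ through the forward-step identity. For $i\in\Iback$, the prox identity \eqref{defprox2} applied to line~\ref{LinebackwardUpdate} gives $x_i^k + \rho_i^k y_i^k = z^k + \rho_i^k w_i^k + e_i^k$, i.e. $\rho_i^k y_i^k = (z^k - x_i^k) + \rho_i^k w_i^k + e_i^k$; this needs to be turned into a bound on $\|y_i^k\|$, and here \eqref{err3} ($\|e_i^k\|^2\le\delta\|z^k-x_i^k\|^2$) and the stepsize lower bound $\underline\rho$ from \eqref{steplow} enter, accounting for the $\underline\rho^{-2}(1+\delta)$ factor. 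There will be a $w_i^k$ term left over, which presumably is itself absorbed or cancels; I need to check whether $w_i^k$ should instead be grouped with $y_i^k$ into the quantity $y_i^k - w_i^k$, since line~\ref{lineBackwardUpdateY} naturally produces $\rho_i^k(y_i^k - w_i^k)\;?=\;z^k - x_i^k + e_i^k$ — indeed from line~\ref{lineaupdate}--\ref{lineBackwardUpdateY}, $\rho_i^k y_i^k = a - x_i^k = z^k + \rho_i^k w_i^k + e_i^k - x_i^k$, so $\rho_i^k(y_i^k - w_i^k) = (z^k-x_i^k) + e_i^k$, which is exactly clean.

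So the refined plan is: (i) expand $\|\nabla\varphi_k\|^2$ into primal and dual blocks; (ii) handle the primal block by the split-and-Lemma~\ref{lemBasic} argument above; (iii) for the dual block, first note $\sum_i y_i^k = \sum_i(y_i^k - w_i^k)$ since $\sum_{i=1}^n w_i^k = 0$ by the convention \eqref{defWnm}, then bound $\|\sum_i(y_i^k-w_i^k)\|^2 \le n\sum_i\|y_i^k - w_i^k\|^2$ by Lemma~\ref{lemBasic}; (iv) for $i\in\Iback$ use $\rho_i^k(y_i^k-w_i^k) = (z^k-x_i^k)+e_i^k$, apply the square-of-a-sum inequality, \eqref{err3}, and $\rho_i^k\ge\underline\rho$ to get $\|y_i^k-w_i^k\|^2 \le 2\underline\rho^{-2}(1+\delta)\|z^k-x_i^k\|^2$; (v) for $i\in\Iforw$ use the forward-step identity to show $\|y_i^k - w_i^k\|$ is controlled — specifically $y_i^k - w_i^k = T_i x_i^k - w_i^k$ and with $T_i z^k - w_i^k = (z^k - x_i^k)/\rho_i^k$, Lipschitzness gives $\|T_ix_i^k - T_iz^k\|\le L_i\|x_i^k-z^k\|$, so $\|y_i^k-w_i^k\| \le L_i\|z^k-x_i^k\| + \underline\rho^{-1}\|z^k-x_i^k\|$; squaring and using Lemma~\ref{lemBasic} again yields a bound of the form $(\text{const})\cdot(L_i^2 + \underline\rho^{-2})\|z^k-x_i^k\|^2$; (vi) combine all pieces, collect the worst constants over $i$, use $\bar L = \max_{i\in\Iforw}L_i$ and $|\Iforw|$, and verify the resulting constant matches $\xi_1$ in \eqref{defxi1} (keeping careful track of the factors of $2$ and $n$ produced by the repeated applications of Lemma~\ref{lemBasic}). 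The main obstacle I anticipate is the bookkeeping in step (v)–(vi): making sure the forward-step terms really do reduce to a clean multiple of $\|z^k-x_i^k\|^2$ without leftover $\|w_i^k\|$ or $\|T_iz^k\|$ terms, and then matching the exact numerical constant $2n[1+2\gamma^{-1}(\bar L^2|\Iforw| + \underline\rho^{-2}(1+\delta))]$ rather than just a bound of the right form; the substance of the lemma is really this constant-chasing, and the only subtle analytic point is recognizing that one should work with $y_i^k - w_i^k$ throughout rather than $y_i^k$ alone.
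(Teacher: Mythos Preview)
Your approach is correct and in fact slightly cleaner than the paper's. The primal block $\sum_{i=1}^{n-1}\|x_i^k-x_n^k\|^2$ is handled identically. The difference lies in the dual block $\gamma^{-1}\|\sum_{i=1}^n y_i^k\|^2$.

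You subtract $w_i^k$ immediately, writing $\sum_i y_i^k=\sum_i(y_i^k-w_i^k)$ via $\sum_i w_i^k=0$, apply Lemma~\ref{lemBasic} once to get $n\sum_i\|y_i^k-w_i^k\|^2$, and then bound each summand using the prox identity (for $i\in\Iback$) or the forward-step identity plus Lipschitz continuity (for $i\in\Iforw$). The paper instead first splits $\sum_i y_i^k$ into $\big(\sum_{\Iback}y_i^k+\sum_{\Iforw}T_iz^k\big)+\sum_{\Iforw}(y_i^k-T_iz^k)$, applies Lemma~\ref{lemBasic} to separate these two pieces, substitutes the identities \eqref{eqRearrange1}--\eqref{eqRearrange2} into the first piece (where the $w_i^k$ terms then cancel), and handles the second piece by Lipschitz continuity alone. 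This second decomposition is what produces the factor $|\Iforw|$ multiplying $\bar L^2$ in the stated $\xi_1$.

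Your route yields a constant of the form $2n\big[1+\gamma^{-1}\max\{\underline\rho^{-2}(1+\delta),\,\bar L^2+\underline\rho^{-2}\}\big]$, which is never larger than the paper's $\xi_1$ (check the two cases $\Iforw=\emptyset$ and $|\Iforw|\ge 1$), so the stated inequality with $\xi_1$ follows by relaxation. Your anticipated ``main obstacle'' of matching the exact constant is therefore not really an obstacle: you get a sharper bound and then weaken it. The only substantive analytic idea---working with $y_i^k-w_i^k$ so that $\sum_i w_i^k=0$ kills the stray terms---is present in both arguments; everything else is bookkeeping.
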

\begin{proof}
Using
	Lemma~\ref{LemGradAffine},
	\begin{eqnarray}
		\|\nabla \varphi_k\|^2
		=
	\gamma^{-1}\left\|\sum_{i=1}^{n}  y^k_i\right\|^2
	+
		\sum_{i=1}^{n-1}
		\|x^k_i -  x_{n}^k\|^2
    \label{eqGrad}.
	\end{eqnarray}
	Using Lemma~\ref{lemBasic}, we begin by writing the second term on the
	right of~\eqref{eqGrad} as
	\begin{align}
				\sum_{i=1}^{n-1}
		\|x^k_i -  x_{n}^k\|^2
		&\leq
2\sum_{i=1}^{n-1}
\left( 
	\|x^k_i -  z^k\|^2 +\| z^k -  x_{n}^k\|^2
	\right) 
\leq 
2n\sum_{i=1}^{n}
\|z^k-x^k_i\|^2 
. \label{secondterm}
	\end{align}

We next consider the first term in~\eqref{eqGrad}. Rearranging the update
equations for Algorithm~\ref{AlgfullyAsync} as given in lines \ref{LinebackwardUpdate} and \ref{ForwardxUpdate}, we may write
\pagebreak[3]
	\begin{align}\label{eqRearrange1}
		y_i^k &= \left(\rho_{i}^{k}\right)^{-1}
		\left( z^k - x_i^k+\rho_{i}^{k}w_i^{k}+e_i^{k}\right),
		& i &\in\Iback
		\\\label{eqRearrange2}
		T_i  z^{k} &=
		\left(\rho_{i}^{k}\right)^{-1}
		\left( z^{k} - x_i^k+\rho_{i}^{k}w_i^{k}\right),
		& i&\in\Iforw.
	\end{align}
	Note that \eqref{eqRearrange1} rewrites line \ref{LinebackwardUpdate}
of Algorithm~\ref{AlgfullyAsync} using \eqref{defprox2}. The first term
of~\eqref{eqGrad} may then be written as	
	\begin{align}
    \left\|\sum_{i=1}^{n} y^k_i\right\|^2
	&=
	\left\|\sum_{i\in\Iback} y^k_i + \sum_{i\in\Iforw}  \left(T_i z^{k} + y^k_{i} - T_i z^{k}\right)\right\|^2
	\nonumber\\\nonumber
	&\overset{\text{(a)}}{\leq}
	2\left\|\sum_{i\in\Iback}  y^k_i + \sum_{i\in\Iforw}  T_i  z^{k}\right\|^2
	+
	2\left\|\sum_{i\in\Iforw} \left(y^k_{i} - T_i z^{k}\right)\right\|^2
	\\
	&\overset{\text{(b)}}{=}
	2\left\|\sum_{i=1}^n  \big(\rho_i^{k}\big)^{-1} 
	\left(  z^{k} - x_i^k+\rho_i^{k}w_i^{k}
	\right) + \sum_{i\in\Iback}\big(\rho_i^{k}\big)^{-1} e_i^{k}\right\|^2 \nonumber \\
	&\qquad\quad 
	+\; 2\left\|\sum_{i\in\Iforw} \left(Tx^k_{i} - T_i z^{k}\right)\right\|^2
	\nonumber \\
	&\overset{\text{(c)}}{\leq}
	4\left\|\sum_{i=1}^n \big(\rho_i^{k}\big)^{-1} 
	\left(  z^{k} - x_i^k+\rho_i^{k}w_i^{k}
	\right)\right\|^2
	+
	4\left\|\sum_{i\in\Iback}\big(\rho_i^{k}\big)^{-1} e_i^{k}\right\|^2
	\nonumber\\&\qquad\quad
	+\;2|\Iforw|\sum_{i\in\Iforw} 
	\left\|
	T_i x^k_{i} - T_i z^{k}\right\|^2
	\nonumber \\
	&\overset{\text{(d)}}{\leq}
	4n\underline{\rho}^{-2}
	\left(
	\sum_{i=1}^n \left\| 
	  z^{k} - x_i^k
	\right\|^2
	+
	\sum_{i\in\Iback}
	\|e_i^{k}\|^2
	\right) 
	+
	2|\Iforw|\sum_{i\in\Iforw} \left(L_i^2\|x_{i}^k-z^{k}\|^2 \right)	
	\nonumber 
	\\
	&\overset{\text{(e)}}{\leq}
    \xi_1'\sum_{i=1}^n\|z^k-x_i^k\|^2   \label{eqEndBounded}
	\end{align}
	where 
	\begin{align*}
	\xi_1' = 
	4n(\bar{L}^2|\Iforw|+\underline{\rho}^{-2}(1+\delta))
	\end{align*}
	where recall $\bar{L} = \max_{i\in\Iforw}L_i$.
	In the above, (a) uses Lemma~\ref{lemBasic}, while (b) is obtained by substituting 
	\eqref{eqRearrange1}-\eqref{eqRearrange2} into the first squared norm
	and using $y_i^k =T_i x_i^k$ for $i\in\Iforw$ in the second. Next, (c)
	uses Lemma~\ref{lemBasic} once more on both terms. Inequality (d) uses
	Lemma~\ref{lemBasic}, the Lipschitz continuity of $T_i$, $\sum_{i=1}^n
	w_i^k = 0$, and Assumption~\ref{assStep}. Finally, (e) follows by collecting terms and using Assumption ~\ref{assErr}. Combining~\eqref{eqGrad},
	\eqref{secondterm}, and~\eqref{eqEndBounded} establishes the Lemma with
	$\xi_1$ as defined in \eqref{defxi1}.
\end{proof}	
\begin{lemma}
\label{PositivePhi}	\label{lemLowerBoundPhi}
	Suppose that assumptions~\ref{AssMonoProb}, \ref{assErr}, and \ref{assStep} hold.  
	Then for all $1\leq k\leq\bar{K} $
\begin{equation}\nonumber
\varphi_k(p^k)
	\geq
	\xi_2
	\sum_{i=1}^n
	\|z^{k}-x^k_i\|^2
.
\end{equation}
where
\begin{align}
\xi_2 = 
\min
\left\{
(1-\sigma)\overline{\rho}^{-1}
,\;
\min_{j\in\Iforw}
\left\{
\overline{\rho}_{j}^{-1} - L_j 
\right\}
\right\}>0\label{defxi2}.
\end{align}
Furthermore, for all such $k$,
\begin{align}\label{eqAltLB}
\varphi_k(p^k)+\sum_{i\in\Iforw}L_i\|z^k - x_i^k\|^2
\geq
(1-\sigma)\urho\sum_{i\in\Iback}\|y_i^k - w_i^k\|^2
+
\urho\sum_{i\in\Iforw}\|T_i z^k - w_i^k\|^2.
\end{align}
\end{lemma}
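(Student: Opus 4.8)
The plan is to evaluate $\varphi_k$ at the current iterate $p^k=(z^k,\bw^k)$ and bound the result term by term, splitting the index set according to whether $i\in\Iback$ or $i\in\Iforw$. Using the convention $w_n^k=-\sum_{i=1}^{n-1}w_i^k$ from~\eqref{defWnm}, the definition~\eqref{defHyper} collapses at $p=p^k$ to the single sum
\[
\varphi_k(p^k)=\sum_{i=1}^{n}\langle z^k-x_i^k,\;y_i^k-w_i^k\rangle ,
\]
so it suffices to lower-bound each $\langle z^k-x_i^k,\,y_i^k-w_i^k\rangle$ in two ways: once in terms of $\|z^k-x_i^k\|^2$ (for the first claim), and once in terms of $\|y_i^k-w_i^k\|^2$ when $i\in\Iback$, or of $\|T_iz^k-w_i^k\|^2$ when $i\in\Iforw$ (for~\eqref{eqAltLB}).

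For $i\in\Iback$, rearranging~\eqref{eqRearrange1} gives $z^k-x_i^k=\rho_i^k(y_i^k-w_i^k)-e_i^k$, equivalently $\rho_i^k(y_i^k-w_i^k)=(z^k-x_i^k)+e_i^k$. Expanding the inner product with the first form and applying~\eqref{err2},
\[
\langle z^k-x_i^k,\,y_i^k-w_i^k\rangle=\rho_i^k\|y_i^k-w_i^k\|^2-\langle e_i^k,\,y_i^k-w_i^k\rangle\;\geq\;(1-\sigma)\rho_i^k\|y_i^k-w_i^k\|^2 ,
\]
and expanding with the second form and applying~\eqref{err1}, $\langle z^k-x_i^k,\,y_i^k-w_i^k\rangle\geq(1-\sigma)(\rho_i^k)^{-1}\|z^k-x_i^k\|^2$. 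Since $\underline{\rho}\leq\rho_i^k\leq\overline{\rho}$ by Assumption~\ref{assStep}, these become $\langle z^k-x_i^k,\,y_i^k-w_i^k\rangle\geq(1-\sigma)\underline{\rho}\|y_i^k-w_i^k\|^2$ and $\geq(1-\sigma)\overline{\rho}^{-1}\|z^k-x_i^k\|^2$.

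For $i\in\Iforw$, rearranging~\eqref{eqRearrange2} gives $z^k-x_i^k=\rho_i^k(T_iz^k-w_i^k)$, hence $\|z^k-x_i^k\|^2=(\rho_i^k)^2\|T_iz^k-w_i^k\|^2$. Decomposing $y_i^k-w_i^k=(T_ix_i^k-T_iz^k)+(T_iz^k-w_i^k)$, bounding the first part below by $-L_i\|z^k-x_i^k\|^2$ through Cauchy--Schwarz and $L_i$-Lipschitz continuity, and noting the second part contributes $(\rho_i^k)^{-1}\|z^k-x_i^k\|^2$, I obtain
\[
\langle z^k-x_i^k,\,y_i^k-w_i^k\rangle+L_i\|z^k-x_i^k\|^2\;\geq\;(\rho_i^k)^{-1}\|z^k-x_i^k\|^2=\rho_i^k\|T_iz^k-w_i^k\|^2\;\geq\;\underline{\rho}\|T_iz^k-w_i^k\|^2 ,
\]
and likewise $\langle z^k-x_i^k,\,y_i^k-w_i^k\rangle\geq\big((\rho_i^k)^{-1}-L_i\big)\|z^k-x_i^k\|^2\geq(\overline{\rho}_i^{-1}-L_i)\|z^k-x_i^k\|^2$, which is positive by~\eqref{stepforlow}.

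Summing the two ``$\|z^k-x_i^k\|^2$'' lower bounds over $i=1,\dots,n$ gives the first inequality with $\xi_2$ exactly as in~\eqref{defxi2}, and $\xi_2>0$ since each of its constituents is positive; summing the ``$\|y_i^k-w_i^k\|^2$'' bound over $\Iback$ and the displayed forward-step bound over $\Iforw$ gives~\eqref{eqAltLB}. I do not expect a real obstacle here; the points needing care are carrying two distinct lower bounds for each $\Iback$ term, and, for~\eqref{eqAltLB}, moving $\sum_{i\in\Iforw}L_i\|z^k-x_i^k\|^2$ to the left-hand side so that the Lipschitz defect in the forward-step terms is cancelled before invoking $\rho_i^k\geq\underline{\rho}$.
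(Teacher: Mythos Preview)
Your argument is correct: the single-sum reduction of $\varphi_k(p^k)$, the two parallel lower bounds for $i\in\Iback$ using \eqref{err1}--\eqref{err2}, and the Lipschitz decomposition for $i\in\Iforw$ all go through exactly as you outline. The paper itself does not give a proof here but merely cites \cite[Lemmas~12--13]{johnstone2018projective}; your write-up is the natural direct argument behind that citation, so there is nothing to compare.
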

\begin{proof}
The claimed results are special cases of those given in lemmas 12-13
of~\cite{johnstone2018projective}.
\end{proof}
\section{New Lemmas Needed to Derive Convergence Rates}\label{secNew}
\begin{lemma}\label{LemEveryOtherIter}\label{lb_ratio}\label{lemma_BoundedStepsize}\label{lemStepBounded}
	Suppose assumptions \ref{assMono}, \ref{assErr}, and \ref{assStep} hold, and recall
	$\alpha_k$ computed on line~\ref{linealpha} of
	Algorithm~\ref{AlgfullyAsync}. For all $1 \leq k < \bar{K}$,
	it holds that $\alpha_k\geq\ualpha \triangleq {\ubeta\xi_2}/{\xi_1} > 0$, where
	$\xi_1$ and $\xi_2$ are as defined in~\eqref{defxi1} and~\eqref{defxi2}. 
\end{lemma}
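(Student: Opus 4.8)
The plan is to combine the two bounds from Lemmas~\ref{lemBoundedGrad} and~\ref{lemLowerBoundPhi} directly. First I would record that the quantity $\pi_k$ computed on line~\ref{linePiUpdate} is nothing but $\|\nabla\varphi_k\|^2$: using the gradient formula~\eqref{defGrad} from Lemma~\ref{LemGradAffine} and the norm~\eqref{gammanorm}, one has
\[
\|\nabla\varphi_k\|^2 = \gamma\cdot\gamma^{-2}\Big\|\sum_{i=1}^n y_i^k\Big\|^2 + \sum_{i=1}^{n-1}\|x_i^k - x_n^k\|^2 = \gamma^{-1}\|v^k\|^2 + \|u^k\|^2 = \pi_k,
\]
so that $\alpha_k = \beta_k\varphi_k(p^k)/\pi_k = \beta_k\varphi_k(p^k)/\|\nabla\varphi_k\|^2$ for $1\le k<\bar K$.

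Next, abbreviate $S_k \triangleq \sum_{i=1}^n\|z^k - x_i^k\|^2$. Since $k<\bar K$, the algorithm did not terminate at iteration $k$ via line~\ref{lineReturn}, so $\pi_k>0$. Lemma~\ref{lemBoundedGrad} gives $\pi_k = \|\nabla\varphi_k\|^2 \le \xi_1 S_k$, and since $\pi_k>0$ and $\xi_1<\infty$ this forces $S_k>0$. On the other hand, Lemma~\ref{lemLowerBoundPhi} gives $\varphi_k(p^k)\ge \xi_2 S_k > 0$ with $\xi_2>0$. Combining, and using $\beta_k\ge\underline\beta$,
\[
\alpha_k = \frac{\beta_k\varphi_k(p^k)}{\pi_k} \;\ge\; \frac{\underline\beta\,\xi_2 S_k}{\xi_1 S_k} \;=\; \frac{\underline\beta\,\xi_2}{\xi_1} \;=\; \ualpha,
\]
and $\ualpha>0$ because $\underline\beta>0$, $\xi_2>0$, and $0<\xi_1<\infty$.

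There is essentially no hard step here: the result is a one-line consequence of the earlier lemmas. The only point requiring care is that the common factor $S_k$ must be cancelled, i.e.\ one must rule out the degenerate case $S_k=0$; this is exactly where the hypothesis $k<\bar K$ is used, via the fact that termination on line~\ref{lineReturn} occurs precisely when $\pi_k=0$, together with the upper bound $\pi_k\le\xi_1 S_k$ from Lemma~\ref{lemBoundedGrad}. One should also note in passing the harmless mismatch between the $\|\nabla\varphi_k\|$ appearing in the statement of Lemma~\ref{propFejer} and the $\pi_k=\|\nabla\varphi_k\|^2$ used on line~\ref{linealpha}; the correct normalization is the squared one, consistent with the algorithm and with the computation above.
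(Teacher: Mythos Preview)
Your proposal is correct and follows essentially the same route as the paper: both combine the upper bound on $\|\nabla\varphi_k\|^2$ from Lemma~\ref{lemBoundedGrad} with the lower bound on $\varphi_k(p^k)$ from Lemma~\ref{lemLowerBoundPhi} and the bound $\beta_k\ge\ubeta$. Your version is in fact slightly more careful than the paper's, since you explicitly justify why the common factor $S_k$ is nonzero (via $k<\bar K\Rightarrow\pi_k>0$) before cancelling it.
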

\begin{proof}
		By Lemma \ref{propFejer}, $\alpha_k$ defined on line \ref{linealpha}
		of Algorithm~\ref{AlgfullyAsync} may be expressed as
	\begin{align}\label{eqAlphaHyper}
	\alpha_k = 		\frac{\beta_k\varphi_k(z^k,\bw^k)}{\|\nabla\varphi_k\|^2}.
	\end{align}
By Lemma~\ref{lemBoundedGrad}, 
	$
		\|\nabla\varphi_k\|^2
		\leq
		\xi_1\sum_{i=1}^{n}
		\| z^k - x_i^k\|^2,
	$
	where $\xi_1$ is defined in (\ref{defxi1}). 
Furthermore, Lemma \ref{lemLowerBoundPhi} implies that 
$
\varphi_k(z^k,\bw^k)\geq 
	\xi_2\sum_{i=1}^{n}\| z^k-x^k_i\|^2,
$
	where $\xi_2$ is defined in (\ref{defxi2}). Combining these two
	inequalities with \eqref{eqAlphaHyper} and $\beta_k \geq \ubeta$ yields
	$\alpha_k \geq \ubeta \xi_1 / \xi_2 = \ualpha$.
	Using \eqref{defxi1} and \eqref{defxi2}, $\xi_1$ and $\xi_2$ are positive
	and finite by assumptions \ref{assErr} and \ref{assStep}.  Since $\beta_k
	> \ubeta > 0$, we conclude that $\ualpha > 0$.
\end{proof}


The next lemma is the key to proving $\bigO(1/k)$ function value convergence
 rate and linear convergence rate under strong mononotonicity and
 cocoercivity. Essentially, it shows that several key quanitities of Algorithm
 \ref{AlgfullyAsync} are square-summable, within known bounds.

\begin{lemma}\label{boundSum}\label{lemSum}
Suppose assumptions \ref{assMono}, \ref{assErr}, and \ref{assStep} hold. 
If $\bar{K}=\infty$, then $\varphi_k(p^k)\to 0$ and $\nabla\varphi_k\to 0$. 
Furthermore, for all $1\leq k<\bar{K}$, 
	\begin{align}
		\sum_{t=1}^{k}
	\|z^{t+1}-z^t\|^2
	&\leq \gamma^{-1}\tau\|p^1 - p^*\|^2,
	\label{zsums}\\
	\sum_{t=1}^{k}\sum_{i=1}^{n-1}
	\|w_i^{t+1}-w_i^t\|^2
	&\leq \tau\|p^1 - p^*\|^2,
	\label{wsum} 
\end{align}
\vspace{-4ex}
\begin{align} 
\sum_{i=1}^n\| z^k-x_i^k\|^2
&\leq \frac{\xi_1}{\ubeta^2 \xi_2^2}\|p^{k+1}-p^k\|^2,
	&
	\sum_{t=1}^{k}\sum_{i=1}^n \| z^t-x_i^t\|^2
	&\leq \frac{\tau\xi_1}{\ubeta^2 \xi_2^2}\|p^1 - p^*\|^2,
	\label{eqLast1}
	\\
\sum_{i=1}^n\|w_i^k - y_i^k\|^2 
	&\leq 
        E_1\|p^{k+1} - p^k\|^2,
	&
	\label{eqLast2}
	\sum_{t=1}^k\sum_{i=1}^n \|w_i^t - y_i^t\|^2 
	&\leq 
		\tau E_1\|p^1 - p^*\|^2, 
	\end{align}
\vspace{-4ex}
\begin{align}
\label{eqLast0}
\sum_{t=1}^k\sum_{i\in\Iforw}\|w_i^t - T_i z^t\|^2 
\leq
\tau E_1\|p^{1} - p^*\|^2,
\end{align}
where 
\begin{align}\label{defE1}
E_1
=
2(1-\sigma)^{-1}\urho^{-1}
(1+\xi_2^{-1}\bar{L}(1+\urho \bar{L}))
\frac{\xi_1}{\ubeta^2 \xi_2},
\end{align}
$\tau$ is as defined in \eqref{eqSums},
and $\xi_1$ and $\xi_2$ are as defined in \eqref{defxi1} and \eqref{defxi2}. 
\end{lemma}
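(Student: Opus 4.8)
The plan is to build everything on top of the two previously established workhorses: the Fej\'er-type summability \eqref{eqSums}--\eqref{wbegin} from Lemma~\ref{propFejer}, and the pointwise bounds from Lemmas~\ref{lemBoundedGrad}, \ref{lemLowerBoundPhi}, and \ref{lemStepBounded}. First I would record the elementary consequences of \eqref{eqUpds}: since $p^{t+1}-p^t = -\alpha_t\nabla\varphi_t$ and the norm on $\calH^n$ splits as in \eqref{gammanorm}, we get $\|z^{t+1}-z^t\|^2 = \gamma^{-1}\cdot\gamma\|z^{t+1}-z^t\|^2 \le \gamma^{-1}\|p^{t+1}-p^t\|^2$ and likewise $\sum_i\|w_i^{t+1}-w_i^t\|^2 \le \|p^{t+1}-p^t\|^2$; summing over $t$ and invoking \eqref{eqSums} gives \eqref{zsums} and \eqref{wsum} immediately.

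Next, for \eqref{eqLast1}: combine the chain $\varphi_k(p^k)\ge\xi_2\sum_i\|z^k-x_i^k\|^2$ (Lemma~\ref{lemLowerBoundPhi}) with $\|\nabla\varphi_k\|^2 \le \xi_1\sum_i\|z^k-x_i^k\|^2$ (Lemma~\ref{lemBoundedGrad}) and with $\alpha_k = \beta_k\varphi_k(p^k)/\|\nabla\varphi_k\|^2$. Writing $\|p^{k+1}-p^k\|^2 = \alpha_k^2\|\nabla\varphi_k\|^2 = \beta_k^2\varphi_k(p^k)^2/\|\nabla\varphi_k\|^2$, I would lower-bound the numerator by $\beta_k^2\xi_2^2\big(\sum_i\|z^k-x_i^k\|^2\big)^2$, upper-bound the denominator by $\xi_1\sum_i\|z^k-x_i^k\|^2$, and use $\beta_k\ge\ubeta$ to cancel one factor of $\sum_i\|z^k-x_i^k\|^2$, yielding $\|p^{k+1}-p^k\|^2 \ge (\ubeta^2\xi_2^2/\xi_1)\sum_i\|z^k-x_i^k\|^2$, which is the left inequality in \eqref{eqLast1}; summing over $t$ and applying \eqref{eqSums} gives the right one. (The degenerate case $\sum_i\|z^k-x_i^k\|^2 = 0$ forces $\nabla\varphi_k = 0$, hence termination, so is harmless for $k<\bar K$.) The convergence statement when $\bar K = \infty$ then follows: \eqref{eqLast1} summed shows $\sum_i\|z^k-x_i^k\|^2\to 0$, so $\varphi_k(p^k)\to 0$ by Lemma~\ref{lemLowerBoundPhi} and $\nabla\varphi_k\to 0$ by Lemma~\ref{lemBoundedGrad}.

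The main work is \eqref{eqLast2} and \eqref{eqLast0}, and this is where I expect the real obstacle. The tool is the auxiliary bound \eqref{eqAltLB}, which controls $\sum_{i\in\Iback}\|y_i^k-w_i^k\|^2$ and $\sum_{i\in\Iforw}\|T_iz^k-w_i^k\|^2$ by $\varphi_k(p^k) + \sum_{i\in\Iforw}L_i\|z^k-x_i^k\|^2$. The plan is: bound $\varphi_k(p^k) \le \alpha_k\|\nabla\varphi_k\|^2 \cdot \beta_k^{-1} \le$ (something)$\cdot\|p^{k+1}-p^k\|^2$ by again using $\|p^{k+1}-p^k\|^2 = \beta_k^2\varphi_k(p^k)^2/\|\nabla\varphi_k\|^2$ together with the already-proven \eqref{eqLast1} to convert $\varphi_k(p^k) \le \xi_1\ubeta^{-2}\xi_2^{-1}\|p^{k+1}-p^k\|^2$; bound the forward terms $\sum_{i\in\Iforw}L_i\|z^k-x_i^k\|^2 \le \bar L\sum_i\|z^k-x_i^k\|^2 \le \bar L\xi_1\ubeta^{-2}\xi_2^{-2}\|p^{k+1}-p^k\|^2$ via \eqref{eqLast1}; plug into \eqref{eqAltLB} to control $\sum_{i\in\Iback}\|y_i^k-w_i^k\|^2$ and $\sum_{i\in\Iforw}\|T_iz^k-w_i^k\|^2$ in terms of $\|p^{k+1}-p^k\|^2$. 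Then for $i\in\Iforw$ I would relate $\|y_i^k-w_i^k\|^2 = \|T_ix_i^k - w_i^k\|^2$ to $\|T_iz^k-w_i^k\|^2$ and $\|T_iz^k-T_ix_i^k\|^2 \le L_i^2\|z^k-x_i^k\|^2$ (using the forward-step formula $x_i^k = z^k-\rho_i^k(T_iz^k-w_i^k)$, so $\|z^k-x_i^k\| = \rho_i^k\|T_iz^k-w_i^k\| \le \urho\bar L$-style manipulations feed the constant), assemble the pieces into a single constant, and track it carefully to match $E_1$ in \eqref{defE1}; the $(1+\urho\bar L)$ and $(1-\sigma)^{-1}\urho^{-1}$ factors are exactly the fingerprints of this bookkeeping. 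Summing over $t$ and using \eqref{eqSums} once more delivers the right-hand inequalities in \eqref{eqLast2} and \eqref{eqLast0}. The obstacle is purely this constant-chasing: making sure every application of Lemma~\ref{lemBasic} and every Lipschitz/step-size substitution lines up so the final constant is precisely $E_1$ rather than merely some finite multiple of $\|p^{k+1}-p^k\|^2$.
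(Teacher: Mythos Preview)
Your proposal is correct and follows essentially the same route as the paper: decompose $\|p^{k+1}-p^k\|^2$ via \eqref{gammanorm} for \eqref{zsums}--\eqref{wsum}, derive the key inequality $\varphi_k(p^k)\le (\xi_1/\ubeta^2\xi_2)\|p^{k+1}-p^k\|^2$ from $\|p^{k+1}-p^k\|=\beta_k\varphi_k(p^k)/\|\nabla\varphi_k\|$ together with Lemmas~\ref{lemBoundedGrad}--\ref{lemLowerBoundPhi} to get \eqref{eqLast1}, and then feed both this bound and \eqref{eqLast1} into \eqref{eqAltLB}, finishing with the Lipschitz triangle-inequality manipulation $\|w_i^k-y_i^k\|^2 \le 2\|w_i^k-T_iz^k\|^2 + 2L_i^2\|z^k-x_i^k\|^2$ for $i\in\Iforw$. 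One small slip: the implication ``$\sum_i\|z^k-x_i^k\|^2\to 0 \Rightarrow \varphi_k(p^k)\to 0$'' does not follow from Lemma~\ref{lemLowerBoundPhi} (which is a \emph{lower} bound); you need the upper bound $\varphi_k(p^k)\le (\xi_1/\ubeta^2\xi_2)\|p^{k+1}-p^k\|^2$ that you yourself establish a few lines later.
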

\begin{proof}
    Fix any $1\leq k<\bar{K}$.
	First, from~\eqref{eqSums} in Lemma~\ref{propFejer}, we have
	$
	\sum_{t=1}^k\|p^{t+1}-p^t\|^2\leq \tau\|p^1-p^*\|^2.
	$
	Since
	$
	\|p^{k+1}-p^k\|^2 = \gamma \|z^{k+1}-z^k\|^2+\sum_{i=1}^{n-1}\|w_i^{k+1}-w_i^k\|^2,
    $ 
    inequalities \eqref{zsums} and~\eqref{wsum} follow immediately.
Next, Lemma \ref{propFejer} also implies that
$
p^{k+1} - p^k = -
\big(
{\beta_k\varphi_k(p^k)}/{\|\nabla\varphi_k\|^2}
\big)
\nabla\varphi_k,
$
and therefore that
\begin{align}\label{eqstepform}
\|p^{k+1}-p^k\|
=
\frac{\beta_k\varphi_k(p^k)}{\|\nabla\varphi_k\|}.
\end{align}
As argued in Lemma~\ref{lemStepBounded}, lemmas~\ref{lemBoundedGrad}
and~\ref{lemLowerBoundPhi} imply that
\begin{align}
\frac{\varphi_k(p^k)}{\|\nabla\varphi_k\|^2}\geq \frac{\xi_2}{\xi_1}
\quad\implies\quad
\|\nabla\varphi_k\|^2\leq \frac{\xi_1}{\xi_2}\varphi_k(p^k).\label{eqRec}
\end{align}
Since $\varphi_k(p^k)\geq 0$ by Lemma \ref{LemGradAffine},
substituting~\eqref{eqRec} into~\eqref{eqstepform} and using the lower bound
on $\beta_k$ yields
\begin{align*}
\varphi_k(p^k) &= \beta_k^{-1}\|\nabla\varphi_k\|\|p^{k+1}-p^k\|
\leq 
\ubeta^{-1}
\left(\frac{\xi_1}{\xi_2}\right)^{\frac{1}{2}}\!\!\!\sqrt{\varphi_k(p^k)}\,
   \|p^{k+1}-p^k\|,
\end{align*}
which in turn leads to
\begin{align}\label{eqBigs}
\sqrt{\varphi_k(p^k)} &\leq 
\ubeta^{-1} \left(\frac{\xi_1}{\xi_2}\right)^{\frac{1}{2}}\|p^{k+1}-p^k\|
\quad\implies\quad
\varphi_k(p^k) \leq \frac{\xi_1}{\xi_2 \ubeta^2}\|p^{k+1}-p^k\|^2.
\end{align}
Therefore, 
\begin{align}
\sum_{t=1}^k\varphi_t(p^t)\leq \frac{\xi_1}{\xi_2 \ubeta^2}\sum_{t=1}^k\|p^{t+1}-p^t\|^2
\leq
\frac{\tau\xi_1}{\ubeta^2 \xi_2} \|p^1 - p^*\|^2\label{eqMe}.
\end{align}
If $\bar{K}=\infty$, \eqref{eqMe} implies that $\varphi_k(p^k)\to 0$, which in
conjunction with \eqref{eqRec} implies that $\nabla\varphi_k\to 0$. Combining
\eqref{eqBigs} with Lemma \ref{lemLowerBoundPhi} yields the first part of
\eqref{eqLast1}.
Applying~\eqref{eqSums} from Lemma \ref{propFejer} then yields the second part
of \eqref{eqLast1}.
	
Finally, we establish \eqref{eqLast2} and \eqref{eqLast0}.
Inequality \eqref{eqAltLB} from Lemma~\ref{lemLowerBoundPhi} implies that
\begin{align}
(1-\sigma)\urho\sum_{i\in\Iback} \|w_i^k - y_i^k\|^2
+
\urho\sum_{i\in\Iforw}
\|w_i^k - T_i z^k\|^2
&\leq 
\varphi_k(p^k)+\bar{L}\sum_{i\in\Iforw}\|x_i^k - z^k\|^2
\nonumber\\\label{eqHi}
&
\leq 
(1+\xi_2^{-1} \bar{L})\frac{\xi_1}{\ubeta^2 \xi_2}\|p^{k+1}-p^k\|^2
\end{align} 
where in the second inequality we have used \eqref{eqBigs} and
\eqref{eqLast1}. Together with~\eqref{eqSums} from Lemma~\ref{propFejer},
\eqref{eqHi}~implies \eqref{eqLast0} . Furthermore, for any $i\in\Iforw$, we may write
$
w_i^k - y_i^k
=
w_i^k - T_iz^k + T_i z^k - y_i^k,
$
from which Lemma~\ref{lemBasic} can be used to obtain
\begin{align} \label{rewritewTz}
\|w_i^k - T_i z^k\|^2\geq \frac{1}{2}\|w_i^k - y_i^k\|^2 - \|T_i z^k - y_i^k\|^2.
\end{align}
Substituting~\eqref{rewritewTz} into the left hand side of~\eqref{eqHi} yields
\begin{align}
&(1-\sigma)\urho\sum_{i\in\Iback} \|w_i^k - y_i^k\|^2
+
\frac{1}{2}\urho\sum_{i\in\Iforw}
\|w_i^k - y_i^k\|^2
\nonumber\\
&\qquad\qquad\leq 
(1+\xi_2^{-1} \bar{L})\frac{\xi_1}{\ubeta^2 \xi_2}\|p^{k+1}-p^k\|^2
+
\urho \sum_{i\in\Iforw}\|T_i z^k - T_i x_i^k\|^2\nonumber
\\\nonumber
&\qquad\qquad\leq 
(1+\xi_2^{-1} \bar{L})\frac{\xi_1}{\ubeta^2 \xi_2}\|p^{k+1}-p^k\|^2
+
\urho \bar{L}^2\sum_{i\in\Iforw}\|z^k - x_i^k\|^2
\\\nonumber
&\qquad\qquad\leq 
(1+\xi_2^{-1}\bar{L}(1+\urho \bar{L}))\frac{\xi_1}{\ubeta^2 \xi_2}\|p^{k+1}-p^k\|^2
\end{align}
where first inequality follows by substituting $y_k^k = T_i x_i^k$ for
$i\in\Iforw$, the second inequality uses the Lipschitz continuity of $T_i$ for
$i\in\Iforw$, and the final inequality uses \eqref{eqLast1}. Because $0 \leq
\sigma < 1$, we may replace the coefficients in front of the two left-hand
sums by the $(1-\sigma)/2$, which can only be smaller, and obtain
\begin{align}\label{eqLast200}
\sum_{i=1}^n
\|w_i^k - y_i^k\|^2
\leq 
2(1-\sigma)^{-1}\urho^{-1}
(1+\xi_2^{-1}\bar{L}(1+\urho \bar{L}))
\frac{\xi_1}{\xi_2}
\|p^{k+1}-p^k\|^2
\end{align}
which yields the first part of~\eqref{eqLast2}. The second part
follows by applying \eqref{eqSums} to \eqref{eqLast200}.
\end{proof}

\noindent The final technical lemma shows that the sequences $\{x_i^k\}$ and $\{y_i^k\}$
are bounded for $i=1,\ldots,n$, and computes specific bounds on their norms.
\begin{lemma}
Suppose assumptions \ref{assMono}, \ref{assErr}, and \ref{assStep} hold. The sequences $\{x_i^k\}$ and $\{y_i^k\}$ are bounded for $i=1,\ldots,n$. 
In particular, for all  $i=1,\ldots,n$, and $1\leq k\leq\bar{K}$,
\begin{align}
\label{eqUpperX}\|x_i^k\|^2
&\leq 
\min_{p^*\in\calS}
\left\{
2\left(\frac{\xi_1}{\xi_2^2} + 
\gamma^{-1}\right)\|p^1-p^*\|^2+2\gamma^{-1}\|p^*\|^2
\right\}
\triangleq B_x^2
\\
\label{eqUpperY}
\|y_i^k\|^2
&\leq
\min_{p^*\in\calS}
\left\{
2(n+E_1)\|p^1 - p^*\|^2 + 2n\|p^*\|^2
\right\}
\qquad\quad\,
\triangleq B_y^2.
\end{align}\label{lemBoundXY}
\end{lemma}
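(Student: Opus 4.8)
The plan is to bound $\|x_i^k\|$ and $\|y_i^k\|$ by relating them to quantities already controlled in Lemmas \ref{propFejer} and \ref{lemSum}, together with a triangle-inequality split against a fixed $p^*\in\calS$. For the $x$-bound: write $x_i^k = z^k - (z^k - x_i^k)$, so that $\|x_i^k\|^2 \leq 2\|z^k\|^2 + 2\|z^k - x_i^k\|^2$ by Lemma \ref{lemBasic}. The second term is handled by the first inequality in \eqref{eqLast1}, namely $\sum_{i=1}^n \|z^k - x_i^k\|^2 \leq (\xi_1/(\ubeta^2\xi_2^2))\|p^{k+1}-p^k\|^2$, which I will further bound using \eqref{eqFejer} (Fej\'er monotonicity gives $\|p^{k+1}-p^k\|^2 \leq \tau^{-1}\cdot\tau\|p^1-p^*\|^2$; more simply $\|p^{k+1}-p^k\| \leq \|p^{k+1}-p^*\| + \|p^k - p^*\| \leq 2\|p^1-p^*\|$, or one can use that $\beta_k(2-\beta_k)\|p^{k+1}-p^k\|^2 \leq \|p^1-p^*\|^2$). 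For $\|z^k\|^2$, split $\|z^k\|^2 \leq 2\|z^k - z^*\|^2 + 2\|z^*\|^2$ and use $\|z^k - z^*\| \leq \gamma^{-1/2}\|p^1 - p^*\|$ from Lemma \ref{propFejer}. Collecting the constants (with the worst case $\|p^{k+1}-p^k\|^2$ absorbed into a multiple of $\|p^1-p^*\|^2$) and then taking the minimum over $p^*\in\calS$ — which is legitimate since every step holds for an arbitrary fixed $p^*=(z^*,\bw^*)\in\calS$ — yields a bound of the form \eqref{eqUpperX}. I will need to be slightly careful that the numerical coefficients match $B_x^2$ exactly; the cleanest route is to note $\|p^{k+1}-p^k\|^2 \leq \|p^1 - p^*\|^2$ is \emph{not} quite available, but $\sum_t\|p^{t+1}-p^t\|^2 \leq \tau\|p^1-p^*\|^2$ gives each individual term $\leq \tau\|p^1-p^*\|^2$, and then one tunes the constants; alternatively the intended bound likely uses a direct estimate $\|z^k - x_i^k\|^2 \leq (\xi_1/\xi_2^2)\|p^1-p^*\|^2$ coming from combining \eqref{eqLast1} with $\|p^{k+1}-p^k\|^2 \leq \ubeta^2 \tau \|p^1-p^*\|^2$ or similar, matching the displayed $\xi_1/\xi_2^2$ coefficient.

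For the $y$-bound: the natural decomposition is $y_i^k = w_i^k + (y_i^k - w_i^k)$, giving $\|y_i^k\|^2 \leq 2\|w_i^k\|^2 + 2\|y_i^k - w_i^k\|^2$. The second term is controlled, summed over $i$, by the first part of \eqref{eqLast2}: $\sum_{i=1}^n \|w_i^k - y_i^k\|^2 \leq E_1\|p^{k+1}-p^k\|^2 \leq E_1\tau\|p^1-p^*\|^2$ (or a clean variant). For $\|w_i^k\|^2$, split against $w_i^*$: $\|w_i^k\|^2 \leq 2\|w_i^k - w_i^*\|^2 + 2\|w_i^*\|^2 \leq 2\|p^1-p^*\|^2 + 2\|w_i^*\|^2$ using \eqref{wbegin}; and one may need to handle $i=n$ via $w_n^k = -\sum_{i=1}^{n-1}w_i^k$, though the relevant $y_n^k$ bound can also be obtained by summing. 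Summing the contributions over $i$ and bounding $\sum_i \|w_i^*\|^2 \leq n\|p^*\|^2$, then taking $\min_{p^*\in\calS}$, produces a bound of the shape $2(n+E_1)\|p^1-p^*\|^2 + 2n\|p^*\|^2 = B_y^2$.

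The finiteness of $B_x$ and $B_y$ is then immediate: $\xi_1,\xi_2$ are finite and positive by Assumptions \ref{assErr}–\ref{assStep} (Lemma \ref{lemBoundedGrad} and \eqref{defxi2}), $E_1 < \infty$ by \eqref{defE1}, $\calS$ is nonempty by Assumption \ref{assMono}(4) so the minima are over a nonempty set, and $\gamma > 0$. I expect the only real obstacle to be bookkeeping: reconciling the exact numerical constants in $B_x^2$ and $B_y^2$ with whatever intermediate bound on $\|p^{k+1}-p^k\|^2$ one uses, and making sure the $i=n$ coordinate is treated consistently (since $w_n^k$ is defined, not iterated, via \eqref{defWnm}). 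Everything else is a routine application of Lemma \ref{lemBasic}, the triangle inequality, and the already-established bounds \eqref{eqLast1}, \eqref{eqLast2}, and \eqref{wbegin}.
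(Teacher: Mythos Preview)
Your proposal is correct and follows essentially the same approach as the paper: split $\|x_i^k\|^2$ via $z^k$ and bound $\|z^k - x_i^k\|^2$ by \eqref{eqLast1}; split $\|y_i^k\|^2$ via $w_i^k$ and bound $\|y_i^k - w_i^k\|^2$ by \eqref{eqLast2}; handle $i=n$ separately through $w_n^k = -\sum_{i<n} w_i^k$; then optimize over $p^*\in\calS$. The paper's execution is marginally cleaner in that it bounds $\|p^k\|^2 \leq 2\|p^1-p^*\|^2 + 2\|p^*\|^2$ once via Fej\'er monotonicity and then reads off both $\|z^k\|^2 \leq \gamma^{-1}\|p^k\|^2$ and $\sum_{i<n}\|w_i^k\|^2 \leq \|p^k\|^2$ from the norm structure \eqref{gammanorm}, rather than splitting each against $z^*$ or $w_i^*$ separately as you suggest --- but the two routes give the same constants. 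One small point you gloss over: to write $\min_{p^*\in\calS}$ rather than $\inf$, the paper invokes that $\calS$ is closed and convex (Proposition~\ref{prop:SClosedConv}) and that the bounding expressions are continuous in $p^*$, so the minimum is attained; nonemptiness alone does not suffice.
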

\vspace{-5ex}
\begin{proof}
Assumption \ref{assMono} asserts $\calS$ is nonempty, so let $p^*\in\calS$ and
fix any $1 \leq k < \bar{K}$. From Lemma~\ref{lemBasic},
\begin{align}
\|p^k\|^2
&=
\|p^k-p^*+p^*\|^2
\leq
2\|p^k-p^*\|^2+2\|p^*\|^2
\leq 
2\|p^1-p^*\|^2+2\|p^*\|^2,
\end{align}
where the final inequality uses \eqref{eqFejer}.
It immediately follows that
\begin{align}\label{eqzUB}
\|z^k\|^2&\leq \gamma^{-1}\|p^k\|^2 \leq 2\gamma^{-1}\|p^1-p^*\|^2+2\gamma^{-1}\|p^*\|^2
\\
\label{eqWup} 
\sum_{i=1}^{n-1}\|w_i^k\|^2 &\leq \|p^k\|^2\leq 2\|p^1-p^*\|^2+2\|p^*\|^2.
\end{align}
Furthermore, using the definition of $w_n^k$ and Lemma~\ref{lemBasic}, we also have
\begin{align}\label{eqwncase}
\|w_n^k\|^2 = 
\norm{\sum_{i=1}^{n-1}w_i^k}^2
\leq 
(n-1) \sum_{i=1}^{n-1}\norm{w_i^k}^2
\leq 
2n\|p^1-p^*\|^2+2n\|p^*\|^2.
\end{align}
Therefore, for all $i=1,\ldots, n$, we have
\begin{align}
\|x_i^k\|^2
&\leq 
2\|z^k-x_i^k\|^2+2\|z^k\|^2
\label{eq2min1}
\leq
2\left(\frac{\xi_1}{\ubeta^2 \xi_2^2} + 
\gamma^{-1}\right)\|p^1-p^*\|^2+2\gamma^{-1}\|p^*\|^2,
\end{align}
where second inequality uses \eqref{eqzUB} and \eqref{eqLast1}.
Finally, for $i=1,\ldots,n$
\begin{align}
\|y_i^k\|^2
&\leq
2\|y_i^k - w_i^k\|^2 + 2\|w_i^k\|^2
\label{eq2min2}
\leq 
2(n+E_1)\|p^1 - p^*\|^2 + 2n\|p^*\|^2,
\end{align}
where  the second inequality uses \eqref{eqLast2} and
\eqref{eqWup}-\eqref{eqwncase}. Note that the factor of $n$ is only necessary
when $i=n$, but for simplicity we will just a single bound for all $i$.

Finally, the set $\calS$ is closed and convex from
Proposition~\ref{prop:SClosedConv}, and the expressions \eqref{eq2min1} and
\eqref{eq2min2} are continuous functions of $p^*$. Thus, we may minimize these
bounds over $\calS$, yielding
\eqref{eqUpperX}-\eqref{eqUpperY}.
\end{proof}

\section{Function Value Convergence Rate}\label{secFunc}
\subsection{Assumptions}
We now consider the optimization problem given in \eqref{ProbOpt0}, which we repeat here:
\begin{eqnarray}\label{ProbOpt}
F^*=\min_{z\in\calH} F(z) =\min_{z\in\calH}  \sum_{i=1}^{n}f_i(z).
\end{eqnarray}
\begin{assumption}
	\label{assOPT}
Problem \eqref{ProbOpt} conforms to the following:
	\begin{enumerate}
		\item $\calH$ is a real Hilbert space. 
		\item Each $f_i:\calH\to(-\infty,+\infty]$ is convex, closed, and proper. 
		\item There exists some subset $\Iforw\subseteq\{1,\ldots,n\}$ s.t.
		for all $i\in\Iforw$, $f$ is Fr\'echet differentiable everywhere and
		$\nabla f_{i}$ is $L_i$-Lipschitz continuous.
		\item For $i\in\Iback\triangleq \{1,\ldots,n\} \backslash \Iforw$,
		$\prox_{\rho f_i}$ can be computed 
		to within the error tolerance specified in Assumption \ref{assErr} 
		(however, these functions are not
		precluded from also having Lipschitz continuous gradients).
		\item Letting $T_i = \partial f_i$ for $i=1,\ldots,n$, the solution
		set $\calS$ in \eqref{defCompExtSol} is nonempty.
	\end{enumerate}
\end{assumption}
\noindent When $\lvert\Iback\rvert \geq 1$, it is possible for~\eqref{ProbOpt} to
have a finite solution, yet for $\calS$ to be empty.  For
constraint-qualification-like conditions precluding such pathological
situations, see for example~\cite[Corollary 16.50]{bauschke2011convex}.

\begin{lemma}\label{LemOptStuff}
If Assumption \ref{assOPT} holds, then Assumption \ref{AssMonoProb} holds for \eqref{probCompMonoMany} with $T_i = \partial f_i$ for $i=1,\ldots,n$. If $(z,\bw)\in\calS$, then $z$ is a solution to \eqref{ProbOpt}.  The solution value $F^*$ is finite. 
\end{lemma}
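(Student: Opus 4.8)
\textit{Proof proposal.} The statement collects three routine consequences of convex analysis, so the plan is to verify each in turn, everywhere taking $T_i=\partial f_i$. For the first claim I would check the four items of Assumption~\ref{AssMonoProb}. Item~1 is immediate: the subdifferential of any proper convex function is monotone. For item~2, Assumption~\ref{assOPT}(3) gives, for $i\in\Iforw$, that $f_i$ is Fr\'echet differentiable on all of $\calH$ with $L_i$-Lipschitz gradient; since a convex function that is everywhere (G\^ateaux, hence Fr\'echet) differentiable has $\partial f_i=\{\nabla f_i\}$, the operator $T_i$ is single-valued, $L_i$-Lipschitz, and satisfies $\text{dom}(T_i)=\calH$. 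For item~3, Assumption~\ref{assOPT}(2) makes each $f_i$ closed, proper, and convex, so $\partial f_i$ is maximal monotone by a standard result (\emph{e.g.}~\cite{bauschke2011convex}); moreover $\prox_{\rho T_i}=\prox_{\rho\partial f_i}=\prox_{\rho f_i}$ by~\eqref{defProxOpt}, which Assumption~\ref{assOPT}(4) posits is computable to the tolerance of Assumption~\ref{assErr}. Item~4 is exactly Assumption~\ref{assOPT}(5).

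For the remaining two claims, let $(z,\bw)\in\calS$. By~\eqref{defCompExtSol}, $w_i\in\partial f_i(z)$ for $i=1,\ldots,n-1$ and $-\sum_{i=1}^{n-1}w_i\in\partial f_n(z)$. First, membership in a subdifferential forces $z\in\text{dom}(\partial f_i)\subseteq\text{dom}(f_i)$, so $f_i(z)<+\infty$ for every $i$; combined with properness of each $f_i$ (which gives $f_i(z)>-\infty$), this shows $F(z)=\sum_{i=1}^n f_i(z)$ is finite. Next, adding the $n$ subgradient inequalities --- equivalently, invoking the elementary inclusion $\sum_{i=1}^n\partial f_i(z)\subseteq\partial F(z)$ --- the vector $\sum_{i=1}^{n-1}w_i+\bigl(-\sum_{i=1}^{n-1}w_i\bigr)=0$ lies in $\partial F(z)$. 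By Fermat's rule, $z$ is a global minimizer of the convex function $F$, so $z$ solves~\eqref{ProbOpt} and $F^*=F(z)$, which we have just seen is finite.

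The argument is entirely routine and I would not expect any genuine obstacle; the only two points that deserve a moment's care are (a) using only the easy inclusion $\sum_i\partial f_i\subseteq\partial F$ rather than the sum rule with equality, since the latter would require a constraint qualification not assumed here (and is exactly the subtlety flagged in the remark following Assumption~\ref{assOPT}), and (b) pinning down the finiteness of $F^*$ --- boundedness below comes from properness of the $f_i$, while boundedness above comes from the fact that any $(z,\bw)\in\calS$ places $z$ in the domain of every $\partial f_i$, hence of every $f_i$.
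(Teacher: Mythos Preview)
Your proposal is correct and follows essentially the same approach as the paper's proof: maximal monotonicity of subdifferentials of closed proper convex functions, the elementary inclusion $\sum_i\partial f_i\subseteq\partial F$ combined with Fermat's rule, and finiteness of $F^*$ via subdifferentiability of each $f_i$ at $z$. You simply spell out the verification of the four items of Assumption~\ref{AssMonoProb} more explicitly than the paper, which compresses that part into a single sentence.
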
 
\begin{proof}
	Closed, convex, and proper functions have maximal monotone
	subdifferentials \cite[Theorem 21.2]{bauschke2011convex}. That $z$ is a
	solution to \eqref{ProbOpt} for any $(z,\bw)\in\calS$ is a consequence of
	Fermat's rule \cite[Proposition 27.1]{bauschke2011convex} and the
	elementary fact that $\sum_i\partial f_i(x)\subseteq\partial(\sum_i
	f_i)(x)$. Since $\calS$ is nonempty, a solution to \eqref{ProbOpt} exists.
	For any $(z,\bw)\in\calS$, the function $f_i$ must be subdifferentiable
	and hence finite at $z$ for all $i=1,\ldots,n$, so $F^* = \sum_{i=1}^n
	f_i(z)$ must be finite.
\end{proof}

In the following analysis, we consider two types of convergence rates. First
we will establish a rate of the form
\begin{align*}
\sum_{i=1}^n f_i(\bar{x}_i^k) - F^* = \bigO(1/k)
\quad\text{ and }\quad
\|\bar{x}_i^k - \bar{x}_j^k\| = \bigO(1/k)\quad\forall i,j=1,\ldots,n, 
\end{align*}
where $\bar{x}_i^k$ is an appropriate averaging of the sequence $\{x_i^k\}$. With an additional Lipschitz continuity assumption, we can derive a more direct rate of the form
\begin{align}\label{eqBetterConv}
\sum_{i=1}^n f_i(\bar{x}_j^k) - F^* = \bigO(1/k)
\end{align}
For an appropriate index $j$.
This additional assumption is as follows:

\begin{assumption}
\label{assLip}
If $|\Iback|>1$, there exists $\Ilip\subseteq \Iback$ such that
$|\Ilip|\geq |\Iback|-1$, 
and for all $i\in\Ilip$ the function $f_i$ is $M_i$-Lipschitz continuous on the ball: $\{x:\|x\|\leq B_x\}$ where $B_x$ is defined in \eqref{eqUpperX}. 
\end{assumption}

By virtue of Assumption \ref{assLip}, note that for all $i\in\Ilip$, $\{x:\|x\|\leq B_x\}\subseteq\dom(f_i)$. 

We point out that it is not surprising that Assumption \ref{assLip} is
required to derive convergence rates of the form \eqref{eqBetterConv}: suppose
the first two functions $f_1$ and $f_2$ are the respective indicator functions
of closed nonempty convex sets $\calC_1$ and $\calC_2$, that is, $f_i(x)=0$ if
$x\in\calC_i$ and otherwise $f_i(x)=+\infty$. Since neither function is
differentiable, $\{1,2\}\subseteq \Iback$. Since neither function is Lipschitz
continuous, $\{1,2\}\notin\Ilip$, unless $B_x\subseteq
\calC_1\cap\calC_2$.  This last situation is of little interest, since the
iterates remain inside $\calC_1 \cap \calC_2$ for all iterations and thus
the constraints encoded by $f_1$ and $f_2$ may be disregarded.

Instead, suppose $B_x \not\subseteq \calC_1\cap\calC_2$ and thus $\{1,2\}\notin
\Ilip$. Then $|\Ilip|\leq |\Iback|-2$, which violates Assumption \ref{assLip}.
Suppose anyway that we could establish a convergence rate of $\bigO(1/k)$ (or
similar) for some sequence of points $\tilde{x}^k$ generated by the algorithm.
But this would imply that $\tilde{x}^k\in\calC_1\cap\calC_2$ for all $k$,
meaning that we would have to be able to solve an arbitrary two-set convex feasibility
problem in a single iteration.  Of course, it is easy to construct a
counterexample in which Algorithm \ref{AlgfullyAsync} does not find a point in
the intersection of two closed convex sets within a finite number of
iterations.

 So, to derive a convergence rate of the form \eqref{eqBetterConv} we allow
 for only one function to be non-Lipschitz and thus able to encode a
 ``hard''  constraint. Any other nonsmooth functions present in the problem
 must be Lipschitz continuous on the bounded set $B_x$ which contains all
 points  potentially encountered by the algorithm.

 \subsection{Main Result}


\begin{theorem}
Suppose assumptions \ref{assErr}, \ref{assStep}, and \ref{assOPT} hold. 
For $1\leq k\leq \bar{K}$, let
\begin{eqnarray*}
(\forall i=1,\ldots,n)\quad \bar{x}_{i}^k = \frac{\sum_{t=1}^k \alpha_t x_{i}^t}{\sum_{t=1}^k\alpha_t},
\end{eqnarray*}
where $\alpha_t$ is calculated on line~\ref{linealpha} of Algorithm~\ref{AlgfullyAsync}. Fixing any $p^*\in\calS$, one has
\begin{enumerate}
	\item 
	If $\bar{K}<\infty$,
	\begin{align}\label{eqFint}
	(\forall j=1,\ldots,n)\quad 
	\sum_{i=1}^{n} f_i(x_j^{\bar{K}}) = \sum_{i=1}^{n} f_i(z^{\bar{K}+1})= F^*,
	\end{align}
	\item For all $1\leq k<\bar{K}$ 
\begin{eqnarray}\label{eqMultiFuncResult0}
\sum_{i=1}^{n} f_i(\bar{x}^k_i) - F^*
\leq
\frac{E_2\|p^1 - p^*\|^2 + E_3\|p^1 - p^*\|}{k}
\end{eqnarray}
where 
\begin{align}\label{defE2new}
E_2 &= 
\frac{\xi_1}{2\ubeta\xi_2}\left(
1 + (3+2E_1)\tau + \orho_n\tau\!\left( 2 + \gamma E_1 +
\frac{\gamma\delta\xi_1}{\ubeta^2\xi_2^2}\right)
\right)
\\
E_3 &=2\sqrt{n}\|p^*\|\frac{\xi_1}{\ubeta\xi_2}.
\label{defE3}
\end{align}
Furthermore, 
\begin{align}\label{defergup}
(\forall i,l=1,\ldots,n)\qquad\|\bar{x}_i^k-  \bar{x}_l^k\|
\leq
\frac{4 \|p^1-p^*\|}{\ualpha k} =
\frac{4\xi_1\|p^1-p^*\|}{\ubeta \xi_2 k}.
\end{align}
\item 
Additionally suppose Assumption \ref{assLip} holds. If $\Iback=1$, let $j$ be
the unique element in $\Iback$. If $|\Iback|>1$, let $j$ be the unique element
of $\Iback\backslash\Ilip$. If $|\Iback|=0$ then choose $j$ to be any index in
$\Iforw$. Then, for all $1\leq k<\bar{K}$,
\begin{eqnarray}\label{eqMultiFuncResult}
\sum_{i=1}^{n} f_i(\bar{x}^k_j) - F^*
\leq
\frac{E_2\|p^1-p^*\|^2 + E_4\|p^1 - p^*\|}{k}
\end{eqnarray}
where 
\begin{align}\label{defE4}
E_4 = E_3 + 
\frac{4\xi_1}{\ubeta\xi_2}
\left(
\sum_{i\in\Ilip}
M_i
+
nB_y(1
+
2\bar{L} B_x)
\right).
\end{align}
\end{enumerate}
\end{theorem}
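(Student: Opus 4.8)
The strategy is to build a single telescoping inequality whose left-hand side is a weighted average of values $\varphi_t(p^t)$ (equivalently, of ``gap-like'' quantities at the iterates $x_i^t$), and whose right-hand side collapses to $\|p^1-p^*\|^2$ plus lower-order terms, after which convexity and Jensen's inequality convert the averaged bound into the stated $\bigO(1/k)$ rate for $\sum_i f_i(\bar x_i^k)-F^*$. Concretely, fix $p^*=(z^*,\bw^*)\in\calS$, so $w_i^*\in\partial f_i(z^*)$ for $i<n$ and $w_n^*=-\sum_{i<n}w_i^*\in\partial f_n(z^*)$, and recall $y_i^t\in\partial f_i(x_i^t)$. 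The subgradient inequality gives, for each $i$ and each $t$,
\begin{align*}
f_i(x_i^t) - f_i(z^*) \leq \langle y_i^t, x_i^t - z^*\rangle,
\qquad
f_i(z^*) - f_i(x_i^t) \leq \langle w_i^*, z^* - x_i^t\rangle.
\end{align*}
Summing the first over $i=1,\dots,n$ and using $\sum_i w_i^* = 0$ (after adding a telescoping $\sum_i\langle w_i^*, x_i^t - x_n^t\rangle$ term that vanishes appropriately), I would reorganize $\sum_i f_i(x_i^t) - F^*$ into an expression built from $\varphi_t(p^t)$ plus inner products of the form $\langle v^t, z^* - z^t\rangle$ and $\langle u_i^t, w_i^* - w_i^t\rangle$ — precisely the ingredients that appear in the separator-projector step. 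This is the standard ``the hyperplane value dominates the optimality gap up to correction terms'' computation, but it has to be done carefully with the $\gamma$-weighted inner product of \eqref{gammanorm}.

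Next I would multiply through by $\alpha_t$ and sum over $t=1,\dots,k$. The key telescoping identity is $p^{t+1} = p^t - \alpha_t\nabla\varphi_t$ from \eqref{eqUpds}, which lets me rewrite $\alpha_t\langle \nabla\varphi_t, p^t - p^*\rangle$ via the cosine rule \eqref{eqCosine} as $\tfrac12(\|p^t-p^*\|^2 - \|p^{t+1}-p^*\|^2 + \|p^{t+1}-p^t\|^2)$; summing telescopes to at most $\tfrac12\|p^1-p^*\|^2 + \tfrac12\sum_t\|p^{t+1}-p^t\|^2 \leq \tfrac12\|p^1-p^*\|^2 + \tfrac12\tau\|p^1-p^*\|^2$ by \eqref{eqSums}. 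The remaining ``correction'' terms — those involving $z^t - x_i^t$, $w_i^t - y_i^t$, $w_i^t - T_iz^t$, and the proximal errors $e_i^t$ — are each handled by Cauchy–Schwarz followed by the square-summability bounds \eqref{eqLast1}, \eqref{eqLast2}, \eqref{eqLast0} from Lemma~\ref{lemSum}, together with the crude bound $\sum_t\|p^{t+1}-p^t\|^2\leq\tau\|p^1-p^*\|^2$; the $\|p^*\|$ terms (from the $z^*$-dependence that doesn't cancel) pick up the $\sqrt{n}\|p^*\|$ factor in $E_3$. After dividing by $\sum_{t=1}^k\alpha_t \geq \ualpha k$ (using Lemma~\ref{lemStepBounded}) and invoking convexity of each $f_i$ — so $\sum_i f_i(\bar x_i^k) \leq (\sum_t\alpha_t)^{-1}\sum_t\alpha_t\sum_i f_i(x_i^t)$ — I get \eqref{eqMultiFuncResult0}. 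The bound \eqref{defergup} on $\|\bar x_i^k - \bar x_l^k\|$ follows separately and more easily: $x_i^t - x_l^t = u_i^t - u_l^t$ (for $i,l<n$, with obvious modification when one index is $n$), and $\alpha_t u_i^t$ is (a block of) $p^{t+1}-p^t$, so $\|\sum_t\alpha_t(x_i^t - x_l^t)\| \leq 2\sum_t\|p^{t+1}-p^t\|$; Cauchy–Schwarz in $t$ against \eqref{eqSums} gives the $\sqrt{k}\cdot\sqrt{\tau}$, but actually one wants the sharper route $\|\sum_t\alpha_t u_i^t\|\le \sum_t\|\alpha_t u_i^t\| $ bounded via $\sum\|p^{t+1}-p^t\|\le \sqrt{k}\sqrt{\sum\|p^{t+1}-p^t\|^2}$ — I would double-check whether the claimed bound uses this or the triangle-plus-Fejér estimate $\|w_i^{t+1}-w_i^*\|\le\|p^1-p^*\|$ summed, which more directly yields the stated constant $4\|p^1-p^*\|/(\ualpha k)$.

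For Part 3, the issue is upgrading $\sum_i f_i(\bar x_i^k)$ (mismatched arguments) to $\sum_i f_i(\bar x_j^k)$ (common argument $\bar x_j^k$). Write $\sum_i f_i(\bar x_j^k) - F^* = \big(\sum_i f_i(\bar x_i^k) - F^*\big) + \sum_{i\neq j}\big(f_i(\bar x_j^k) - f_i(\bar x_i^k)\big)$. For $i\in\Ilip$ the Lipschitz bound on the ball $\{\|x\|\le B_x\}$ (Lemma~\ref{lemBoundXY} guarantees all $\bar x_i^k$ lie there, being convex combinations of the $x_i^t$) gives $f_i(\bar x_j^k) - f_i(\bar x_i^k) \le M_i\|\bar x_j^k - \bar x_i^k\|$, controlled by \eqref{defergup}. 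The one index not in $\Ilip$ is handled differently: for that remaining smooth index $i\in\Iforw$ (or there is none), use the gradient inequality $f_i(\bar x_j^k) - f_i(\bar x_i^k) \le \langle \nabla f_i(\bar x_j^k), \bar x_j^k - \bar x_i^k\rangle$ and bound $\|\nabla f_i(\bar x_j^k)\|$ via Lipschitzness and $B_x$, or more simply use the $B_y$ bound on $\|y_i^k\|$ plus the Lipschitz-gradient modulus to control $\|\nabla f_i\|$ on the ball (explaining the $nB_y(1+2\bar L B_x)$ term in $E_4$). Part 1 (finite termination at $\bar K$) is the easy case: termination means $\pi_{\bar K}=0$, i.e. $\nabla\varphi_{\bar K}=0$, so by Lemma~\ref{LemGradAffine}\eqref{lem0grad} we have $(x_n^{\bar K}, y_1^{\bar K},\dots,y_{n-1}^{\bar K})\in\calS$, and Lemma~\ref{LemOptStuff} gives optimality; the values $\sum_i f_i(x_j^{\bar K})$ all equal $F^*$ because $\nabla\varphi_{\bar K}=0$ forces $x_1^{\bar K}=\cdots=x_n^{\bar K}$.

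**Main obstacle.** The delicate part is the bookkeeping in the first step — tracking exactly which inner-product terms survive after using $\sum_i w_i^* = 0$ and $\sum_i w_i^t = 0$, and bounding the proximal-error contributions $\langle e_i^t,\cdot\rangle$ using \eqref{err1}–\eqref{err3} rather than naive Cauchy–Schwarz (the term $\gamma\delta\xi_1/(\ubeta^2\xi_2^2)$ in $E_2$ comes from feeding \eqref{err3} into the square-summability of $\|z^t-x_i^t\|$). Getting the explicit constants $E_2,E_3,E_4$ to match requires patience, but no new idea beyond Lemma~\ref{lemSum}; the conceptual content is entirely the telescoping-plus-summability mechanism.
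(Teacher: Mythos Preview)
Your proposal is correct and follows essentially the same route as the paper. The paper makes the decomposition concrete by writing $\sum_i f_i(x_i^k)-F^*\le \sum_i\langle y_i^k,x_i^k-z^*\rangle = A_1^k+A_2^k$ with $A_1^k=\langle\sum_i y_i^k,x_n^k-z^*\rangle$ and $A_2^k=\sum_{i<n}\langle y_i^k,x_i^k-x_n^k\rangle$, then substitutes the primal update into $A_1^k$ (shifting $x_n^k\to z^k$ via the prox/forward formula, which produces exactly your ``correction terms'' in $z^k-x_n^k$, $w_n^k-y_n^k$, and $e_n^k$) and the dual update into $A_2^k$ (shifting $y_i^k\to w_i^k$), and then applies the cosine rule componentwise --- which is precisely your $\alpha_t\langle\nabla\varphi_t,p^t-p^*\rangle$ telescoping, just carried out block by block rather than all at once. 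For the bound on $\|\bar x_i^k-\bar x_l^k\|$, your second suggestion is the one the paper uses: summing $w_i^{t+1}-w_i^t=-\alpha_t(x_i^t-x_n^t)$ gives $\bar x_n^k-\bar x_i^k=(w_i^1-w_i^{k+1})/\sum_t\alpha_t$, and then Fej\'er boundedness $\|w_i^{k+1}-w_i^*\|\le\|p^1-p^*\|$ yields the $O(1/k)$ constant directly (your first route via Cauchy--Schwarz in $t$ would only give $O(1/\sqrt{k})$, as you suspected).
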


\begin{proof}
	In the case that $\bar{K}<\infty$, it was established in \cite[Lemma
	5]{johnstone2018projective} that $x_1^{\bar{K}} = \cdots = x_n^{\bar{K}} =
	z^{\bar{K}+1}$ is a solution to \eqref{ProbOpt}, which establishes
	\eqref{eqFint}. We now address points 2 and 3.
	
	\noindent 

\vspace{0.2cm}
\siamversion{\pagebreak[2]}
\underline{Part 1: An upper bound for $\sum_{i=1}^{n}f_i(\bar{x}_{i}^k)-F^*$}
\vspace{0.2cm}

We begin by establishing the upper bound on
$\sum_{i=1}^{n}f_i(\bar{x}_{i}^k)-F^*$ in \eqref{eqMultiFuncResult0}. Since
$f_i$ is convex,
\begin{align}
\sum_{i=1}^n f_i(\bar{x}_i^k)-F^*
&=
\sum_{i=1}^n 
f_i\left(
\frac{\sum_{t=1}^k\alpha_t x_i^t}{\sum_{t=1}^k\alpha_t}
\right)-
F^*
\leq
\frac{\sum_{t=1}^k \alpha_t\left(\sum_{i=1}^n  f_i(x_i^t)-F^*\right)}
{
	\sum_{t=1}^k \alpha_t
}.
\label{eqAv}
\end{align}
Since Lemma \ref{lb_ratio} implies that $\alpha_k\geq\ualpha$, we will aim to
 show that  $\sum_{t=1}^k \alpha_t\left(\sum_{i=1}^n f_i(x_i^t)-F^*\right)$ is
 bounded for all $1\leq k<\bar{K}$ (recall that $\bar{K}$ may be $+\infty$).

The projection updates on lines \ref{eqAlgproj1}-\ref{eqAlgproj2} of Algorithm~\ref{AlgfullyAsync} mean that, for all $1\leq k<\bar{K}$,
\begin{align}
z^{k+1} &= z^k - \gamma^{-1}\alpha_k\sum_{i=1}^n y_i^k
\label{zUpdate}\\\label{wUpdate}
w_i^{k+1} &= w_i^k - \alpha_k(x_i^k - x_n^k),\quad i=1,\ldots,n-1.
\end{align}
Take any $p^* = (x,\bw)\in\calS$.  By Lemma \ref{LemOptStuff}, $F^* =
\sum_{i=1}^n f_i(x)$. Then
\begin{eqnarray}
\sum_{i=1}^{n}f_i(x_i^k) -F^*
&=&
\sum_{i=1}^{n}f_i(x_i^k) -\sum_{i=1}^{n} f_i(x)
\nonumber \\
&\overset{(a)}{\leq}&
\sum_{i=1}^{n}\langle y_i^k,x_i^k - x\rangle
\nonumber\\
&=&
\left\langle \sum_{i=1}^{n} y_i^k,x_{n}^k - x\right\rangle
+
\sum_{i=1}^{n-1}\langle y_i^k,x_i^k - x_{n}^k\rangle
\nonumber\\\label{multi}
&\overset{(b)}{=}&
\underbrace{
	\frac{\gamma}{\alpha_k}\left\langle z^k-z^{k+1},x^k_{n} - x\right\rangle}_{\triangleq A_1^k}
+
\underbrace{\sum_{i=1}^{n-1}\langle y_i^k,x_i^k - x^k_{n}\rangle}_{\triangleq A_2^k}.
\end{eqnarray}
In the above, (a) uses that $y_i^k\in\partial f_i(x_i^k)$ and (b) uses
(\ref{zUpdate}).  We now show that $\alpha_k A_1^k$ and $\alpha_k A_2^k$ both
have a finite sum over $k$.

\vspace{0.2cm}
\noindent\underline{ $\alpha_k A_1^k$ is Summable}
\vspace{0.2cm}

\noindent 
If $n\in\Iback$, $A_1^k$ can be simplified as follows:
\begin{eqnarray}
A_1^k
&\overset{(a)}{=}&
\frac{\gamma}{\alpha_k}
\left\langle z^k-z^{k+1},z^k - x+\rho_{n}^k(w^k_{n}-y_n^k)
+e_n^k
\right\rangle
\nonumber\\\nonumber
&=&
\frac{\gamma}{\alpha_k}\left\langle z^k-z^{k+1},z^k - x\right\rangle
+
\frac{\gamma\rho_{n}^k}{\alpha_k}
\left\langle z^k-z^{k+1},
w_{n}^k-y_n^k\right\rangle
\nonumber\\ & & \quad
+ \;
\frac{\gamma\rho_{n}^k}{\alpha_k}
\left\langle z^k-z^{k+1},
e_n^k\right\rangle
\\
&\overset{(b)}{\leq}&
\frac{\gamma}{\alpha_k}\left\langle z^k-z^{k+1},z^k - x\right\rangle
+
\frac{\gamma\rho_{n}^k}{\alpha_k}
\left\langle z^k-z^{k+1},
w_{n}^k-y_n^k\right\rangle
\nonumber\\
&&
\qquad + \;
\frac{\gamma\rho_{n}^k}{2\alpha_k}
\left(
\|z^k-z^{k+1}\|^2
+
\|
e_n^k\|^2
\right)
\nonumber\\
&\overset{(c)}{\leq}& 
\frac{\gamma}{\alpha_k}\left\langle z^k-z^{k+1},z^k - x\right\rangle
+
\frac{\gamma\rho_{n}^k}{\alpha_k}
\left\langle z^k-z^{k+1},
w_{n}^k-y_n^k\right\rangle
\nonumber\\\label{agh}
&&
\qquad 
+ \;
\frac{\gamma\orho_{n}}{2\alpha_k}
\left(
\|z^k-z^{k+1}\|^2
+
\delta\|z^k - x_n^k\|^2
\right)
\end{eqnarray}
where (a) uses the proximal update on line \ref{LinebackwardUpdate} of the
algorithm for the case $i=n$, (b) used Young's inequality, and (c) uses
assumptions \ref{assErr} and \ref{assStep}. On the other hand, if
$n\in\Iforw$, $A_1^k$ may be written as
\begin{align}
A_1^k
=
\frac{\gamma}{\alpha_k}\left\langle z^k-z^{k+1},z^k - x\right\rangle
+
\frac{\gamma\rho_{n}^k}{\alpha_k}
\left\langle z^k-z^{k+1},
w_{n}^k-T_n z^k\right\rangle,
\label{agh2}
\end{align}
where we have instead used the forward step on line \ref{ForwardxUpdate}. Let
$\chi^k = w_n^k - y_n^k$ when $n\in\Iback$ and $\chi^k = w_n^k -T_n z^k$ when
$n\in\Iforw$. Furthermore, let
\begin{equation} \label{defThetak}
\theta_k = \frac{\gamma\orho_{n}}{2\alpha_k}
\left(
\|z^k-z^{k+1}\|^2
+
\delta\|z^k - x_n^k\|^2
\right)
\end{equation}
when $n\in \Iback$ and $\theta_k = 0$ if $n\in\Iforw$. 
Combining \eqref{agh} and \eqref{agh2}, we may write 
\begin{align}
A_1^k
=
\frac{\gamma}{\alpha_k}\left\langle z^k-z^{k+1},z^k - x\right\rangle
+
\frac{\gamma\rho_{n}^k}{\alpha_k}
\left\langle z^k-z^{k+1},
\chi^k\right\rangle
 +\theta_k.
\label{agh3}
\end{align}
Rewriting the first term in~\eqref{agh3} using (\ref{eqCosine}), we obtain
\begin{eqnarray}\label{comb00}
\frac{\gamma}{\alpha_k}\left\langle z^k-z^{k+1},z^k - x\right\rangle
&=&
\frac{\gamma}{2\alpha_k}\|z^k-z^{k+1}\|^2 
     + \frac{\gamma}{2\alpha_k}\left(\|z^k-x\|^2 -\|z^{k+1}-x\|^2\right)
\end{eqnarray}
The second term in (\ref{agh3}) may be upper bounded using Young's inequality, as
follows:
\begin{align}
\nonumber 
\frac{\gamma\rho_{n}^k}{\alpha_k}
\left\langle z^k-z^{k+1},
\chi^k\right\rangle
\leq 
\frac{\gamma\rho_{n}^k}{2\alpha_k}\|z^k - z^{k+1}\|^2
+
\frac{\gamma\rho_{n}^k}{2\alpha_k}
\|\chi^k\|^2
\end{align}
Thus, we obtain
\begin{align}\label{eqpassLong}
\alpha_k A_1^k
\leq
\frac{\gamma}{2}\left(1+\overline{\rho}_{n}\right)\|z^k-z^{k+1}\|^2 + \frac{\gamma}{2}\|z^k-x\|^2 -\frac{\gamma}{2}\|z^{k+1}-x\|^2
+
\frac{\gamma\overline{\rho}_{n}}{2}
\|\chi^k\|^2
+
\alpha_k\theta_k.
\end{align}
Summing \eqref{eqpassLong} over $k$ yields, for $1\leq k<\bar{K}$,
\begin{align} 
\sum_{t=1}^k \alpha_t A_1^t
\leq
\frac{\gamma}{2}\|z^1-x\|^2
+
\frac{\gamma}{2}\left(1+\overline{\rho}_{n}\right)\sum_{t=1}^k \|z^t-z^{t+1}\|^2 
+
\frac{\gamma\overline{\rho}_{n}}{2}
\sum_{t=1}^k
\|\chi^t\|^2
+\sum_{t=1}^k\alpha_t \theta_t
.\label{eqWrk1}
\end{align}  
We now consider the first three terms on the right-hand side of this relation, employing
Lemma~\ref{lemSum}:
\begin{align*}
\frac{\gamma}{2}\|z^1-x\|^2
&\leq 
\frac{1}{2} \smallnorm{p^1 - p^*}^2
&&&& \text{by \eqref{gammanorm}} \\
\frac{\gamma}{2}\left(1+\overline{\rho}_{n}\right)\sum_{t=1}^k \|z^t-z^{t+1}\|^2
&\leq
\frac{1}{2}(1 + \overline{\rho}_n)\tau \smallnorm{p^1 - p^*}^2
&&&& \text{by \eqref{zsums}} \\
\frac{\gamma\overline{\rho}_{n}}{2}
\sum_{t=1}^k
\|\chi^t\|^2
&\leq
\frac{\gamma\orho_{n}}{2} \tau E_1 \smallnorm{p^1 - p^*}^2
&&&& \text{by \eqref{eqLast2} or \eqref{eqLast0}.}
\end{align*}
In the last inequality, we use~\eqref{eqLast2} when $n\in\Iback$
and~\eqref{eqLast0} when $n\in\Iforw$, and $E_1$ is defined in \eqref{defE1}.
We now consider the last term in~\eqref{eqWrk1}.  When $n\in\Iback$, we have
\begin{align*}
\sum_{t=1}^k\alpha_t \theta_t
&=
\frac{\gamma\orho_{n}}{2}
\left(
\sum_{t=1}^k \|z^k-z^{k+1}\|^2
+
\sum_{t=1}^k \delta\|z^k - x_n^k\|^2
\right)
&&&& \text{by~\eqref{defThetak}} \\
&\leq
\frac{\gamma\orho_{n}}{2}
\left(
\gamma^{-1}\tau \smallnorm{p^1 - p^*}^2 
+ 
\delta \frac{\tau\xi_1}{\ubeta^2\xi_2^2}  \smallnorm{p^1 - p^*}^2
\right)
&&&& \text{by~\eqref{zsums} and~\eqref{eqLast1}} \\
&=
\frac{\tau\orho_n}{2}\left(1 + \frac{\gamma\delta\xi_1}{\ubeta^2\xi_2^2} \right) 
    \smallnorm{p^1 - p^*}^2.
\end{align*}
The resulting inequality also holds trivially when $n\in\Iforw$, since its
left-hand side must be zero. Combining all these inequalities, we obtain
\begin{align}\label{eqbig1}
\sum_{t=1}^k \alpha_t A_1^t
&\leq
\frac{1}{2}
\left(
1 + (1+\orho_n)\tau + \gamma\orho_n\tau E_1 
+ \tau\orho_n\!\left(1 + \frac{\gamma\delta\xi_1}{\ubeta^2\xi_2^2}\right)
\right)
\|p^1-p^*\|^2 \nonumber \\
&=
\frac{1}{2}\left(
1 + \tau + \orho_n\tau\!\left( 2 + \gamma E_1 +
  \frac{\gamma\delta\xi_1}{\ubeta^2\xi_2^2}\right)
\right)
\|p^1-p^*\|^2.
\end{align}


\vspace{0.2cm}
\noindent\underline{$\alpha_k A^k_2$ is Summable}
\vspace{0.2cm}

\noindent
We next perform a similar summability analysis on the second term
in~\eqref{multi}, $A_2^k$. We begin by fixing any $1 \leq k < \bar{K}$ and writing
\begin{align}
\alpha_k A_2^k &= \alpha_k  \sum_{i=1}^{n-1} \langle y_i^k,x_i^k - x_{n}^k\rangle
=
\alpha_k \sum_{i=1}^{n-1} \langle w_i^k,x_i^k - x_{n}^k\rangle
+
\alpha_k \sum_{i=1}^{n-1} \langle y_i^k - w_i^k,x_i^k - x_{n}^k\rangle.
\label{comb2}
\end{align}
Now fix any $i = 1,\ldots,n-1$.
Using \eqref{wUpdate}, 
\begin{align}\label{eqKeys}
x_i^k - x_n^k = -\frac{1}{\alpha_k}(w_i^{k+1}-w_i^k).
\end{align}
Substituting this into the first summand in \eqref{comb2} and then
using~\eqref{eqCosine}, we have
\begin{multline}
\alpha_k\langle w_i^k,x_i^k - x_{n}^k\rangle
=
 \langle w_i^k,w_i^k - w_i^{k+1}\rangle
\\ 
=
 \langle w_i^k-w_i^*,w_i^k - w_i^{k+1}\rangle
 +
  \langle w_i^*,w_i^k - w_i^{k+1}\rangle \text{~~~~~~~~~~~~~~~~~~~~~~~~~}
\\\label{equsecos}
=
\frac{1}{2}
\left(
\|w_i^k-w^*_i\|^2
+
\|w_i^k - w_i^{k+1}\|^2
-
\|w_i^{k+1}-w_i^*\|^2
\right) 
+
  \langle w_i^*,w_i^k - w_i^{k+1}\rangle.
\end{multline}
Using \eqref{eqKeys} in the second summand in \eqref{comb2} and applying
Young's inequality yields
\begin{align}
\alpha_k\langle y_i^k - w_i^k,x_i^k - x_{n}^k\rangle
&=
\langle y_i^k - w_i^k,w_i^k - w_i^{k+1}\rangle
\leq \label{eqCombine2}
\frac{1}{2}
\|
y_i^k - w_i^k
\|^2
+
\frac{1}{2}
\|
w_i^k - w_i^{k+1}
\|^2.
\end{align}
Substituting \eqref{equsecos} and \eqref{eqCombine2} into \eqref{comb2} yields
\begin{align*}
\alpha_k A_2^k
&\leq 
\frac{1}{2}
\sum_{i=1}^{n-1} 
\left(
\|w_i^k-w_i^*\|^2
+2
\|w_i^k - w_i^{k+1}\|^2
-
\|w_i^{k+1}-w_i^*\|^2
+
\|
y_i^k - w_i^k
\|^2
\right)
\\
&\qquad
+\;
\sum_{i=1}^{n-1}\langle w_i^*,w_i^k - w_i^{k+1}\rangle
.
\end{align*}
Summing this relation, we obtain that for any $1 \leq k<\bar{K}$,
\begin{align}
\sum_{t=1}^k\alpha_t A_2^t
&\leq
\frac{1}{2}\sum_{i=1}^{n-1} 
\left(
\|w_i^1-w_i^*\|^2
+
2
\sum_{t=1}^k
\|w_i^t - w_i^{t+1}\|^2
+
\sum_{t=1}^k
\|
y_i^t - w_i^t
\|^2
\right)
\nonumber\\
&\qquad + \;
\sum_{i=1}^{n-1}\langle w_i^*,w_i^1 - w_i^{k+1}\rangle
\\\nonumber
&\leq 
\frac{1}{2}\sum_{i=1}^{n-1} 
\left(
\|w_i^1-w_i^*\|^2
+
2
\sum_{t=1}^k
\|w_i^t - w_i^{t+1}\|^2
+
\sum_{t=1}^k
\|
y_i^t - w_i^t
\|^2
\right)
\\\label{eqNeed}
& \qquad + \;
\sum_{i=1}^{n-1}\|w_i^*\|(\|w_i^1-w_i^*\|+\|w_i^{k+1}-w_i^*\|).
\end{align}
By \eqref{wsum} and \eqref{eqLast2} from Lemma \ref{lemSum}, we then obtain
\begin{align*}
\sum_{t=1}^k
\sum_{i=1}^{n-1}
\left(
\|w_i^t - w_i^{t+1}\|^2
+
\sum_{t=1}^k
\|
y_i^t - w_i^t
\|^2
\right)
&\leq 
\tau (1+E_1)\|p^1-p^*\|^2.
\end{align*}
Finally, we may use \eqref{wbegin} to derive
\begin{align*}
\sum_{i=1}^{n-1}\|w_i^*\|(\|w_i^1-w_i^*\|+\|w_i^{k+1}-w_i^*\|)
&\leq 
2\|p^1-p^*\|\sum_{i=1}^{n-1}\|w_i^*\|
\\
&\leq 
2\sqrt{n}\|p^1-p^*\|\|\bw^*\|
\leq 
2\sqrt{n}\|p^1-p^*\|\|p^*\|,
\end{align*}
where in the second inequality we have used Lemma \ref{lemBasic}. 
Therefore,
\begin{align}\label{eqBiggs}
\sum_{t=1}^k\alpha_t A_2^t
\leq 
\tau (1+E_1)\|p^1-p^*\|^2
+
2\sqrt{n}\|p^*\|\|p^1-p^*\|.
\end{align}


\noindent Recalling \eqref{eqAv} and using \eqref{multi} and the fact that $\alpha_t\geq
\ualpha$ by Lemma \ref{LemEveryOtherIter}, we obtain, for any $1\leq k<\bar{K}$,
\begin{align}
\sum_{i=1}^n f_i(\bar{x}_i^k)-F^*
&\leq
\frac{	\sum_{t=1}^k \alpha_t\left(\sum_{i=1}^n  f_i(x_i^t)-F^*\right)}
{
	\sum_{t=1}^k \alpha_t
}
\nonumber\\
&\leq 
\frac{\sum_{t=1}^k\alpha_t(A_1^t+A_2^t)}
{
	\sum_{t=1}^k \alpha_t
}
\nonumber\\\label{eqFFI}
&\leq 
\frac{\sum_{t=1}^k\alpha_t(A_1^t+A_2^t)}
{
\ualpha k
}
=
\frac{\xi_1}{\ubeta \xi_2}\cdot
\frac{\sum_{t=1}^k\alpha_t(A_1^t+A_2^t)}{k}.
\end{align}
Using \eqref{eqbig1} and \eqref{eqBiggs} in \eqref{eqFFI} establishes \eqref{eqMultiFuncResult0} with $E_2$ and $E_3$ defined in \eqref{defE2new} and \eqref{defE3}.  


\vspace{0.2cm}
	\noindent 
\underline{Part 2: Convergence behavior of $\bar{x}_i^k - \bar{x}_l^k$}
\vspace{0.2cm}

\noindent	
Recall from \eqref{wUpdate} that for all $i=1,\ldots,n-1$,
\begin{align*}
w_i^{t+1} -  w_i^t = \alpha_t(x_i^t - x_n^t).
\end{align*}
Summing this equation over $t=1,\ldots,k$ and dividing by $\sum_{t=1}^k \alpha_t$ yields
\begin{align}\label{eq4n}
(\forall\;i=1,\ldots,n-1) \qquad
\bar{x}_n^k - \bar{x}_i^k = \frac{w_i^1 -  w_i^{k+1}}{\sum_{t=1}^k\alpha_t}.
\end{align}
Therefore,  we obtain for all $i,l \in \{1,\ldots,n-1\}$ that
\begin{align*}
\bar{x}_l^k - \bar{x}_i^k 
&= 
\frac{w_i^1 - w_l^1 - (w_i^{k+1}-w_l^{k+1})}{\sum_{t=1}^k\alpha_t}
\\
&=
\frac{w_i^1-w_i^* - w_l^1+w_l^* - (w_i^{k+1}-w_i^* -w_l^{k+1}+w_l^*)}
     {\sum_{t=1}^k\alpha_t}.
\end{align*}
Using \eqref{wbegin} and the triangle inequality, we therefore have for all
$i,l \in \{1,\ldots,n-1\}$ that
\begin{align}\label{defergup1}
\|\bar{x}_l^k - \bar{x}_i^k\| \leq 
\frac{\|w_l^1-w_l^*\| +\|w_i^1-w_i^*\|+ \|w_l^{k+1}-w_l^*\|+\|w_i^{k+1}-w_i^*\|}
     {\sum_{t=1}^k\alpha_t}
\leq 
\frac{4\|p^1-p^*\|}{\underline{\alpha}k},
\end{align}
where  the second inequality uses lemmas \ref{propFejer} and \ref{lemStepBounded}. 
Finally, we can also use \eqref{eq4n} to obtain, for any $i=1,\ldots,n-1$, that
\begin{align}\label{defergup2}
\norm{\bar{x}_n^k - \bar{x}_i^k}
&= \frac{\smallnorm{w_i^1 - w_i^* - (w_i^k - w_i^*)}}{\sum_{t=1}^k\alpha_t}
\leq \frac{\smallnorm{w_i^1 - w_i^*} + \smallnorm{w_i^k - w_i^*)}}{\sum_{t=1}^k\alpha_t}
\leq 
\frac{
	2\norm{p^1 - p^*}
}{\ualpha k},
\end{align}
where the second inequality again uses lemmas \ref{propFejer} and
\ref{lemStepBounded}. Together, \eqref{defergup1}-\eqref{defergup2} and the
definition of $\ualpha$ imply \eqref{defergup}.

\vspace{0.2cm}
\noindent 
\underline{Part 3: Convergence rates at a single ``splitting" point}
\vspace{0.2cm}

\noindent 
We now prove \eqref{eqMultiFuncResult} under Assumption \ref{assLip}. We start
by establishing that $\bar{x}_j^k$ is within the domain of each function $f_i$
for all $1\leq k<\bar{K}$: fix any $1\leq k<\bar{K}$ and first suppose that
$|\Iback|>0$, which that implies either $j\in\Iback\backslash\Ilip$ or $j$ is
the unique element of $\Iback$. Since $\range(\prox_{\rho f_j})=\dom(f_j)$, we
have that $x_j^k\in\dom(f_j)$ for all $1\leq k<\bar{K}$. Since the domain of a
convex function is a convex set \cite[Proposition 8.2]{bauschke2011convex},
this implies that $\bar{x}_j^k\in\dom(f_j)$. The other possibility is that
$|\Iback|=0$, which means that $j\in\Iforw$ and therefore $\dom(\nabla
f_j)=\calH$, implying that $\dom(f_j)=\calH$, and thus trivially that
$\bar{x}_j^k\in\dom(f_j)$. Finally recall that for $i\neq j$, $i\in
\Ilip\cup\Iforw$, therefore either $\dom(f_i)=\calH$ or $\{x:\|x\|\leq
B_x\}\subseteq \dom(f_i)$. By Lemma \ref{lemBoundXY}, $x_j^k\in \{x:\|x\|\leq
B_x\}$ for all $j=1,\ldots,n$ and $1\leq k<\bar{K}$.  Since the ball $B_x$
convex, we must also have $\bar{x}_j^k\in\dom(f_i)$ in this case.
	
Continuing, we write
	\begin{eqnarray}\label{funcComp}
	\sum_{i=1}^{n}f_i(\bar{x}_{j}^k)-F^* 
	=
	\sum_{i=1}^{n}f_i(\bar{x}_{i}^k)-F^*
	+
	\sum_{i\neq j}(f_i(\bar{x}_{j}^k)-f_i(\bar{x}_i^k)).
	\end{eqnarray}
	The first summation on the right-hand side of this equation is
	$\bigO(1/k)$ by~\eqref{eqMultiFuncResult0}, which has already been
	established.  So we now focus on the terms in second summation.
	
	If $i\in\Iback$ and $i\neq j$, we must have by Assumption \ref{assLip}
	that $i\in\Ilip$. Therefore,
	\begin{align}\label{eqEasLip}
	f_i(\bar{x}_{j}^k)-f_i(\bar{x}_i^k)\leq 
	M_i\|\bar{x}_i^k - \bar{x}_{j}^{k}\| 
	\leq \frac{4 M_i \smallnorm{p^1 - p^*}}{\ualpha k},
	\end{align}
	where the the second inequality arises from \eqref{defergup}.
	 
    Otherwise, we have  $i\in\Iforw$ and $i\neq j$, and since $\dom(\nabla
    f_i) = \calH$, we may use the subgradient inequality to write
	\begin{align}
	f_i(\bar{x}_{j}^k)-f_i(\bar{x}_i^k)
	&\leq
	\langle
	\nabla f_i( \bar{x}_j^k)
	,
	\bar{x}_j^k - \bar{x}_i^k
	\rangle 
	\nonumber\\\nonumber 
		&=
	\langle 
	\nabla f_i( x_i^k)
	,
	\bar{x}_j^k - \bar{x}_i^k
	\rangle
	+
	\langle 
	\nabla f_i( \bar{x}_j^k)-\nabla f_i( x_i^k)
	,
	\bar{x}_j^k - \bar{x}_i^k
	\rangle
	\nonumber\\
	&\leq
	\langle 
	y_i^k
	,
	\bar{x}_j^k - \bar{x}_i^k
	\rangle
	+
      L_i
	\|\bar{x}_j^k - x_i^k\|\|\bar{x}_j^k - \bar{x}_i^k\|
	\nonumber\\
	&\leq
	\| 
	y_i^k
	\|
	\|\bar{x}_j^k - \bar{x}_i^k\|
	+
     L_i(
	\|\bar{x}_j^k\| + \|x_i^k\|)\|\bar{x}_j^k - \bar{x}_i^k\| \nonumber \\
	\label{eqLong}
	&= \Big( 
    \| y_i^k \| + 
	L_i \big( \smallnorm{\bar{x}_j^k} + \smallnorm{\bar{x}_i^k}\big)
	\Big) \smallnorm{\bar{x}_j^k - \bar{x}_i^k},
	\end{align}	
	where the second inequality uses that $y_i^k = \nabla f_i(x_i^k)$ for
	$i\in\Iforw$, the Cauchy-Schwarz inequality, and the Lipschitz continuity
	of $\nabla f_i$.  Lemma~\ref{lemBoundXY} assures us that $\|x_i^k\|,
	\|\bar{x}_j^k\| \leq B_x$ and $\|y_i^k\|\leq B_y$, which in conjunction
	with \eqref{defergup} and $L_i \leq \bar{L}$ leads to
	\begin{align}\label{eqbrb}
	f_i(\bar{x}_{j}^k)-f_i(\bar{x}_i^k)
	\leq
    \frac{4(B_y+2\bar{L}B_x)\|p^1-p^*\|}{\ualpha k}.
	\end{align}

	\noindent 
	Substituting \eqref{eqMultiFuncResult0}, \eqref{eqEasLip}, and
	\eqref{eqbrb} into \eqref{funcComp}, we obtain
	\begin{multline*} 
	\nonumber
	\sum_{i=1}^{n}f_i(\bar{x}_{j}^k)-F^*
	=
	\sum_{i=1}^{n}f_i(\bar{x}_{i}^k)-F^*
	+
	\sum_{i\neq j}\big(f_i(\bar{x}_{j}^k)-f_i(\bar{x}_i^k)\big)
	\\
	\leq 
    \frac{E_2\smallnorm{p^1-p^*}^2 + E_3\smallnorm{p^1-p^*}}{k}
    + 
    \sum_{i\in\Ilip} \frac{4 M_i \smallnorm{p^1 - p^*}}{\ualpha k}
	+
    \frac{4 \lvert\Iforw\rvert B_y(1+2\bar{L}B_x)\|p^1-p^*\|}{\ualpha k}
	\\
	\leq
	\frac{E_2\smallnorm{p^1-p^*}^2 + E_3\smallnorm{p^1-p^*}}{k}
    +
	\frac{4\|p^1-p^*\|}{\underline{\alpha}k}
	\left(
	\sum_{i\in\Ilip}
	M_i
	+
	n(B_y
	+
	2\bar{L}B_x)
	\right)
	.
	\end{multline*} 
\noindent This establishes \eqref{eqMultiFuncResult} with $E_4$ defined in \eqref{defE4}. 
\end{proof}


\section{Consequences of Strong Monotonicity}\label{secStrong}
We now investigate the consequences of strong monotonicity in
\eqref{probCompMonoMany} for the convergence rate of projective splitting.
\begin{assumption}
	\label{assStrong}
	For \eqref{probCompMonoMany}, there is some $l\in\{1,\ldots,n\}$ for which
	$T_l$ is strongly monotone.
\end{assumption}
\noindent Note that under Assumption \ref{assStrong} it is immediate
that the solution of \eqref{probCompMonoMany} is unique.

\begin{theorem}
Suppose assumptions \ref{assMono}, \ref{assErr}, \ref{assStep}, and \ref{assStrong} hold. Let $z^*$ be the unique solution to \eqref{probCompMonoMany}. Fix $p^* = (z^*,\bw^*)\in\calS$. If $\bar{K}=+\infty$ then 
$x_i^k\to z^*$ for $i=1,\ldots, n$ and $z^k\to z^*$. If $\bar{K}<\infty$, $x_i^{\bar{K}}=z^{\bar{K}+1} = z^*$ for $i=1,\ldots,n$.  Furthermore for all $1\leq k<\bar{K}$
\begin{eqnarray}
\label{eqStrongrte2}
\|x_{\avg}^k - z^*\|^2&\leq& 
\frac{\xi_1\|p^1 - p^*\|^2}{\ubeta\xi_2\mu k}
\end{eqnarray}
where $x_{\avg}^k = \frac{1}{k}\sum_{i=1}^t x_l^i$ and $\xi_1$ and $\xi_2$ are
defined in \eqref{defxi1} and \eqref{defxi2} respectively.
\end{theorem}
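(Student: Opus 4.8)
The plan is to turn the strong monotonicity of $T_l$ into a quadratic lower bound on $-\varphi_k(p^*)$ and then exploit the separator geometry. For the finite-termination case, if $\bar K<\infty$ the algorithm exits via Line~\ref{lineReturn} with $\pi_{\bar K}=0$, which is exactly $\|\nabla\varphi_{\bar K}\|^2=0$; by \cite[Lemma~5]{johnstone2018projective} (equivalently, combining the gradient formula~\eqref{defGrad} with Lemma~\ref{LemGradAffine}) this forces $x_1^{\bar K}=\dots=x_n^{\bar K}=z^{\bar K+1}$, and this common point solves~\eqref{probCompMonoMany}; since Assumption~\ref{assStrong} makes the solution unique, it must equal $z^*$.

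The key estimate, valid for all $1\le k<\bar K$, is
\[
\varphi_k(p^*)\ \le\ -\,\mu\,\|x_l^k-z^*\|^2.
\]
To prove it I would expand $\varphi_k(p^*)$ from definition~\eqref{defHyper}: using the convention $w_n^*=-\sum_{i=1}^{n-1}w_i^*$ this collapses to $\varphi_k(p^*)=-\sum_{i=1}^{n}\langle x_i^k-z^*,\,y_i^k-w_i^*\rangle$. Since $(z^*,\bw^*)\in\calS$ we have $w_i^*\in T_iz^*$ for every $i$ (the case $i=n$ being part of the definition of $\calS$), while $y_i^k\in T_ix_i^k$, so monotonicity of each $T_i$ makes every term nonnegative; keeping only the $i=l$ term and applying strong monotonicity of $T_l$ with modulus $\mu$ gives the displayed bound.

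Next I would invoke a sharpened version of the separator inequality. From $p^{k+1}=p^k-\alpha_k\nabla\varphi_k$ with $\alpha_k=\beta_k\varphi_k(p^k)/\|\nabla\varphi_k\|^2$ (Lemma~\ref{propFejer}, \eqref{eqUpds}), expanding $\|p^{k+1}-p^*\|^2$ and using that $\varphi_k$ is affine (so $\langle\nabla\varphi_k,p^k-p^*\rangle=\varphi_k(p^k)-\varphi_k(p^*)$), together with $\varphi_k(p^k)\ge0$ from Lemma~\ref{lemLowerBoundPhi} and $\beta_k\le\obeta<2$, one obtains
\[
\|p^{k+1}-p^*\|^2\ \le\ \|p^k-p^*\|^2+2\alpha_k\varphi_k(p^*).
\]
Substituting the key estimate gives $2\mu\,\alpha_k\|x_l^k-z^*\|^2\le\|p^k-p^*\|^2-\|p^{k+1}-p^*\|^2$; summing over $t=1,\dots,k$ telescopes the right-hand side, and then $\alpha_t\ge\ualpha=\ubeta\xi_2/\xi_1$ from Lemma~\ref{lemStepBounded} yields $\sum_{t=1}^k\|x_l^t-z^*\|^2\le \xi_1\|p^1-p^*\|^2/(2\mu\ubeta\xi_2)$. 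Jensen's inequality applied to $x_{\avg}^k=\tfrac1k\sum_{t=1}^k x_l^t$ then gives~\eqref{eqStrongrte2}. Finally, when $\bar K=\infty$, letting $k\to\infty$ in that same sum shows $\sum_t\|x_l^t-z^*\|^2<\infty$, hence $x_l^k\to z^*$; then $z^k\to z^*$ because $\|z^k-x_l^k\|\to0$ by~\eqref{eqLast1}, and consequently $x_i^k\to z^*$ for every $i$ since $\|z^k-x_i^k\|\to0$.

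I expect the main obstacle to be the key estimate: recognizing that $\varphi_k(p^*)$ equals the negative of a sum of monotonicity gaps $\langle x_i^k-z^*,y_i^k-w_i^*\rangle$ is what lets strong monotonicity enter the argument, and upgrading the plain Fej\'er relation~\eqref{eqFejer} to the sharper form carrying the $2\alpha_k\varphi_k(p^*)$ term is what makes the telescoping close. Everything after that --- telescoping, the stepsize lower bound, Jensen --- is routine bookkeeping, and the bound obtained is at least as strong as~\eqref{eqStrongrte2}.
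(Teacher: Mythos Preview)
Your proof is correct and follows the same overall strategy as the paper: derive $-\varphi_k(p^*)\ge\mu\|x_l^k-z^*\|^2$ from strong monotonicity, pass through the separator geometry to a telescoping inequality, sum, invoke $\alpha_k\ge\ualpha$ from Lemma~\ref{lemStepBounded}, and finish with Jensen. The one place you diverge is the separator step. The paper writes $\varphi_k(p^k)-\varphi_k(p^*)=\alpha_k^{-1}\langle p^k-p^{k+1},p^k-p^*\rangle$ and then applies the cosine identity~\eqref{eqCosine}, which leaves an extra $\sum_t\|p^t-p^{t+1}\|^2$ term that must be bounded via~\eqref{eqSums}. You instead expand $\|p^{k+1}-p^*\|^2$ directly and use $\alpha_k^2\|\nabla\varphi_k\|^2=\alpha_k\beta_k\varphi_k(p^k)$ together with $\beta_k<2$ and $\varphi_k(p^k)\ge0$ to absorb that term from the outset, obtaining the clean relation $\|p^{k+1}-p^*\|^2\le\|p^k-p^*\|^2+2\alpha_k\varphi_k(p^*)$. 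This is slightly more direct and in fact yields $\|x_{\avg}^k-z^*\|^2\le \xi_1\|p^1-p^*\|^2/(2\ubeta\xi_2\mu k)$, a factor of~$2$ better than~\eqref{eqStrongrte2}. Your argument for $x_i^k\to z^*$ and $z^k\to z^*$ via~\eqref{eqLast1} is equivalent to the paper's, which phrases the same conclusion through $\nabla\varphi_k\to0$ from Lemma~\ref{lemSum}.
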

\begin{proof}
Regarding the case $\bar{K} < \infty$, optimality of $x_i^{\bar{K}}$ and
$z^{\bar{K}+1}$ was shown in \cite[Lemma 5]{johnstone2018projective}.

Let $w_n^* = -\sum_{i=1}^{n-1}w_i^*$. Using this notation and recalling the
affine function defined in \eqref{hplane}, we may write
	\begin{align}
		-\varphi_{k}(z^*,{\bf w}^*) &=
		\sum_{i=1}^{n}\langle z^* - x_i^k ,w_i^* - y_i^k\rangle
		\nonumber \\
		&=
		\sum_{i\neq l}
		\langle z^* - x_i^k ,w_i^* - y_i^k\rangle
		+
		\langle z^* - x_l^k ,w_l^* - y_l^k\rangle
		\nonumber \\
		&\geq 		
		\mu\|z^* - x_l^k\|^2, 
		\label{strongMonoIneq}
	\end{align}
where the inequality uses that $(x_i^k,y_i^k)$ and $(z^*,w_i^*)$ are in
$\gra T_i$, along with the monotonicity of $T_i$ for $i\neq l$, and also the
strong monotonicity of $T_l$. Now, Lemma \ref{lemLowerBoundPhi} implies that
$\varphi_k(z^k,\bw^k)\geq 0$. Therefore, for any $1\leq k<\bar{K}$,
\begin{eqnarray*}
	\mu\|z^* - x_l^k\|^2
	&\leq&
	\varphi_k(z^k,{\bf w}^k) - \varphi_k(z^*,{\bf w}^*)
	\\
	&\overset{(a)}{=}&
	\langle \nabla \varphi_k,p^k - p^*\rangle
	\\
		&\overset{(b)}{=}&
	\frac{1}{\alpha_k}\langle
	p^k - p^{k+1}
	,
	p^k - p^*
	\rangle
	\\
		&\overset{(c)}{=}&
	\frac{1}{2\alpha_k}
	\left(
	\|p^k - p^{k+1}\|^2
	+
	\|p^k - p^*\|^2
	-
	\|p^{k+1}-p^*\|^2
	\right)
	\\
		&\overset{(d)}{\leq}&
	\frac{1}{2\ualpha}
	\left(
	\|p^k - p^{k+1}\|^2
	+
	\|p^k - p^*\|^2
	-
	\|p^{k+1}-p^*\|^2
	\right).
\end{eqnarray*}
Above, (a) uses that $\varphi_k$ is affine, (b) employs \eqref{eqUpds} from
Lemma \ref{propFejer}, (c) uses \eqref{eqCosine}, and (d) uses Lemma
\ref{lb_ratio} and \eqref{eqFejer} from Lemma \ref{propFejer}. Summing the resulting
inequality yields, for all $1\leq k<\bar{K}$,
\begin{eqnarray}
	\mu\sum_{t=1}^{k}
	\|z^* - x_l^t\|^2
	&\leq&
	\frac{1}{2\ualpha}
	\left(
	\sum_{t=1}^k\|p^t - p^{t+1}\|^2
	+
	\|p^1 - p^*\|^2
	\right)
	\leq
    \frac{\xi_1}{\ubeta\xi_2}\|p^1 - p^*\|^2, \label{eqIsSum}
\end{eqnarray}
where the second inequality uses Lemma \ref{propFejer} and
\eqref{eqSums} from Lemma \ref{lb_ratio}. So, when $\bar{K}=\infty$, we have
$x_l^k\to z^*$. Lemma \ref{lemSum} asserts that 
$
\nabla\varphi_k\to 0
$ 
in the $\bar{K}=\infty$ case, so from \eqref{defGrad} we conclude that
$x_i^k\to z^*$ for $i=1,\ldots,n$. Lemma \ref{lemSum} also establishes that
$\varphi_k(p^k)\to 0$, so by Lemma \ref{lemLowerBoundPhi}, we have 
$
\|z^k - x_i^k\| \to 0
$
for all $i=1,\ldots,n$
and hence
$
z^k \to z^*.
$
Finally, the convexity of the function $\|\cdot\|^2$ in conjunction with
\eqref{eqIsSum} yields \eqref{eqStrongrte2}.
\end{proof}

\section{Linear Convergence Under Strong Monotonicity and Cocoercivity}
\label{secLin}
If we introduce a cocoercivity assumption along with strong monotonicity, we
can derive linear convergence. We require that all but one of the operators be
cocoercive. Without loss of generality, we designate $T_n$ to be be the
operator that need not be cocoercive.


\begin{assumption}\label{assCoc}
In \eqref{probCompMonoMany}, suppose that for $i=1,\ldots,n-1$, the operator
$T_i$ is cocoercive with parameter $\Gamma_i>0$. Let $\bar{\Gamma} =
\max_{i=1,\ldots,n-1} \Gamma_i$.
\end{assumption}

\noindent If this assumption holds, then $T_1,\ldots,T_{n-1}$ are all
single-valued.   If Assumption~\ref{assStrong} also holds, this
single-valuedness implies that not only is the solution $z^*$
to~\eqref{probCompMonoMany} unique, but that the extended solution set $\calS$
must be singleton of the form $\big\{(z^*,w_1^*,\ldots,w_{n-1}^*)\big\} =
\big\{(z^*,T_1z^*,\ldots,T_{n-1}z^*)\big\}$.

\begin{theorem}\label{ThmLinear}
Suppose assumptions \ref{assMono}, \ref{assErr}, \ref{assStep}, \ref{assStrong}, and
\ref{assCoc} hold. Let $z^*$ be the unique solution to
\eqref{probCompMonoMany} and take $p^*=(z^*,\bw^*) =
(z^*,T_1z^*,\ldots,T_{n-1}z^*) \in\calS$. If $\bar{K}<\infty$, $p^{\bar{K}+1}
= p^*$. For all $1\leq k<\bar{K}$,
\begin{align}\label{eqLinConv}
\|p^{k+1}-p^*\|^2\leq (1-E_5)\|p^k-p^*\|^2
\end{align}
where
\begin{align}
E_5
=
\frac{1}{2}
\left(
\frac{8\xi_1^2(1+\gamma)^2\max\{\mu^{-1},\bar{\Gamma}\}^2
+
2\gamma \xi_1
}{\ubeta^2\xi_2^2}
+
2 E_1
\right)^{-1}
\in\left]0\,,1/4\right].\label{defXi3}
\end{align}

\end{theorem}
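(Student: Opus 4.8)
The plan is to combine the Fej\'er-type recursion of Lemma~\ref{propFejer} with a \emph{reverse} estimate $\|p^k-p^*\|^2\le 2N\|p^{k+1}-p^k\|^2$, where $N$ is the bracketed expression in $E_5=(2N)^{-1}$: substituting $\|p^{k+1}-p^k\|^2\ge E_5\|p^k-p^*\|^2$ into~\eqref{eqFejer} yields~\eqref{eqLinConv} (the relaxation factor $\beta_k(2-\beta_k)$ is bounded below by $\ubeta(2-\obeta)$ and the $\ubeta$-dependence is already carried in $E_5$, so only routine bookkeeping is needed at this last step). The terminating case is immediate: if $\bar K<\infty$, line~\ref{lineReturn} is reached because $\pi_{\bar K}=\|\nabla\varphi_{\bar K}\|^2=0$, so Lemma~\ref{LemGradAffine}(\ref{lem0grad}) puts $(x_n^{\bar K},y_1^{\bar K},\dots,y_{n-1}^{\bar K})$ in $\calS$; since Assumptions~\ref{assStrong} and~\ref{assCoc} make $\calS=\{p^*\}$ a singleton, and line~\ref{lineReturn} sets $p^{\bar K+1}$ equal to exactly that point, $p^{\bar K+1}=p^*$.

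For the reverse estimate, fix $1\le k<\bar K$ and split $\|p^k-p^*\|^2=\gamma\|z^k-z^*\|^2+\sum_{i=1}^{n-1}\|w_i^k-w_i^*\|^2$. Inserting the algorithm's intermediate points and using $\|a+b\|^2\le 2\|a\|^2+2\|b\|^2$ (Lemma~\ref{lemBasic}), write $\|z^k-z^*\|^2\le 2\|z^k-x_l^k\|^2+2\|x_l^k-z^*\|^2$ and $\|w_i^k-w_i^*\|^2\le 2\|w_i^k-y_i^k\|^2+2\|y_i^k-w_i^*\|^2$ for each $i$. The ``algorithmic residual'' parts are precisely those controlled by Lemma~\ref{lemSum}: $\|z^k-x_l^k\|^2\le\sum_{i=1}^n\|z^k-x_i^k\|^2\le(\xi_1/(\ubeta^2\xi_2^2))\|p^{k+1}-p^k\|^2$ by~\eqref{eqLast1}, and $\sum_{i=1}^{n-1}\|w_i^k-y_i^k\|^2\le E_1\|p^{k+1}-p^k\|^2$ by~\eqref{eqLast2}. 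Together these account for the $2\gamma\xi_1/(\ubeta^2\xi_2^2)+2E_1$ summand inside $N$.

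The ``solution-distance'' parts $\gamma\|x_l^k-z^*\|^2+\sum_{i=1}^{n-1}\|y_i^k-w_i^*\|^2$ are controlled through monotonicity. Expand $-\varphi_k(p^*)=\sum_{i=1}^n\langle x_i^k-z^*,y_i^k-w_i^*\rangle$; every summand is nonnegative ($T_i$ monotone, $p^*\in\calS$), the $i=l$ summand is $\ge\mu\|x_l^k-z^*\|^2$ (strong monotonicity of $T_l$), and for $i\le n-1$ cocoercivity gives, term by term, $\|y_i^k-w_i^*\|^2\le\Gamma_i\langle x_i^k-z^*,y_i^k-w_i^*\rangle\le\bar{\Gamma}\langle x_i^k-z^*,y_i^k-w_i^*\rangle$ (this is applied also to $i=l$ when $l\le n-1$, which avoids any separate Lipschitz argument). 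Summing over $i\le n-1$ and discarding the nonnegative $i=n$ term yields $\sum_{i=1}^{n-1}\|y_i^k-w_i^*\|^2\le\bar{\Gamma}(-\varphi_k(p^*))$ and $\|x_l^k-z^*\|^2\le\mu^{-1}(-\varphi_k(p^*))$, hence $\gamma\|x_l^k-z^*\|^2+\sum_{i=1}^{n-1}\|y_i^k-w_i^*\|^2\le(1+\gamma)\max\{\mu^{-1},\bar{\Gamma}\}(-\varphi_k(p^*))$. Finally, since $\varphi_k$ is affine with $\varphi_k(p^*)\le 0$ (Lemma~\ref{LemGradAffine}), $-\varphi_k(p^*)\le\varphi_k(p^k)-\varphi_k(p^*)=\langle\nabla\varphi_k,p^k-p^*\rangle\le\|\nabla\varphi_k\|\,\|p^k-p^*\|$, and Lemma~\ref{lemBoundedGrad} together with~\eqref{eqLast1} gives $\|\nabla\varphi_k\|\le(\xi_1/(\ubeta\xi_2))\|p^{k+1}-p^k\|$. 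Collecting the pieces produces a quadratic inequality $\|p^k-p^*\|^2\le P\|p^{k+1}-p^k\|^2+Q\|p^{k+1}-p^k\|\,\|p^k-p^*\|$ with $P=2\gamma\xi_1/(\ubeta^2\xi_2^2)+2E_1$ and $Q=2(1+\gamma)\max\{\mu^{-1},\bar{\Gamma}\}\,\xi_1/(\ubeta\xi_2)$; Young's inequality gives $\|p^k-p^*\|^2\le(Q^2+2P)\|p^{k+1}-p^k\|^2$, and one checks $Q^2+2P\le 2N$, so $\|p^{k+1}-p^k\|^2\ge E_5\|p^k-p^*\|^2$, which feeds into~\eqref{eqFejer}.

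The bound $E_5\le 1/4$ follows because $N\ge 2E_1\ge 2$ — a consequence of $\ubeta<2$, $\xi_1\ge 2$, and $\xi_2^{-1}\ge\urho$, all from Assumptions~\ref{assErr}--\ref{assStep}. The main obstacle is the solution-distance step of the previous paragraph: getting the clean factor $\max\{\mu^{-1},\bar{\Gamma}\}^2$ in $E_5$ — rather than an $n$-dependent constant or a higher power of it — hinges on applying cocoercivity \emph{term by term} so that the dual residual of every cocoercive operator (including the strongly monotone one, when it lies among them) is absorbed directly into $-\varphi_k(p^*)$, and then on carrying out the quadratic-inequality bookkeeping tightly enough to land on the stated form of $E_5$. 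A secondary nuisance is reconciling the relaxation factor $\beta_k(2-\beta_k)$ in~\eqref{eqFejer} with the $\beta$-independent statement of $E_5$, which is handled by tracking $\ubeta$ through the estimates above.
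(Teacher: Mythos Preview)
Your proposal is correct and follows essentially the same architecture as the paper: decompose $\|p^k-p^*\|^2$ by inserting $x_l^k$ and $y_i^k$, control the algorithmic residuals via Lemma~\ref{lemSum}, control the solution-distance pieces through strong monotonicity and cocoercivity applied to $-\varphi_k(p^*)$, and feed the resulting reverse estimate into~\eqref{eqFejer}. The only tactical difference is that the paper converts $\varphi_k(p^k)-\varphi_k(p^*)$ to $\alpha_k^{-1}\langle p^k-p^{k+1},p^k-p^*\rangle$ and applies Young's inequality with a free parameter $\nu$ (later optimized), whereas you pass through Cauchy--Schwarz and the explicit bound $\|\nabla\varphi_k\|\le(\xi_1/\ubeta\xi_2)\|p^{k+1}-p^k\|$ before solving the quadratic inequality; both routes land inside the stated $E_5$. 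Your argument for $E_5\le 1/4$ via $E_1\ge 1$ is in fact tidier than the paper's longer chain of inequalities, and your explicit flagging of the $\beta_k(2-\beta_k)$ factor in~\eqref{eqFejer} is appropriate --- the paper's own proof silently replaces it by $1$ at the same step.
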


\begin{proof}
		For the finite-termination case, optimality of $p^{\bar{K}+1}$ was established
in \cite[Lemma 5]{johnstone2018projective}.  Henceforth, we thus assume that
$\bar{K} = \infty$.
The key idea of the proof is to show that for $E_5$ defined in \eqref{defXi3},
\begin{eqnarray}\label{eqKeyBound}
E_5\|p^k - p^*\|^2\leq \|p^{k+1}-p^k\|^2,
\end{eqnarray} 
which, when used with \eqref{eqFejer} of Lemma \ref{propFejer}, implies
\begin{eqnarray*}
\|p^{k+1}-p^*\|^2
&\leq&
\|p^k - p^*\|^2
-
\|p^{k+1}-p^k\|^2
\leq
(1-E_5)\|p^k - p^*\|^2 .
\end{eqnarray*}
We now establish (\ref{eqKeyBound}).
For $1\leq k<\bar{K}$, we have $\varphi_k(p^k)\geq 0$ and hence
\begin{align*}
	\varphi_k(p^k) - \varphi_k(p^*)
	&\geq
\sum_{i=1}^{n}
\langle z^* - x_i^k,w_i^* - y_i^k\rangle
\\	
&=
\frac{1}{2}
\sum_{i=1}^{n}
\langle z^* - x_i^k,w_i^* - y_i^k\rangle
+
\frac{1}{2}
\sum_{i=1}^{n}
\langle z^* - x_i^k,w_i^* - y_i^k\rangle
\\
	&\geq 
	\frac{\mu}{2}\|z^* - x_l^k\|^2
	+
	\sum_{i=1}^{n-1} 
	\frac{1}{2\Gamma_i}
	\|y_i^k - w_i^*\|^2.
\end{align*}
The last inequality here follows because $\sum_{i=1}^{n}
\langle z^* - x_i^k,w_i^* - y_i^k\rangle \geq \mu\|z^* - x_l^k\|^2$ by the
same logic as in~\eqref{strongMonoIneq}, and by the cocoerciveness of
$T_1,\ldots,T_{n-1}$. We therefore obtain
\begin{align}
\frac{\mu}{2}\|z^* - x_l^k\|^2
+
\sum_{i=1}^{n-1} 
\frac{1}{2\Gamma_i}
\|y_i^k - w_i^*\|^2
\leq
\varphi_k(p^k) - \varphi_k(p^*)
&=
\langle\nabla\varphi_k,p^k - p^*\rangle 
\nonumber\\
&=
\frac{1}{\alpha_k}\langle p^k - p^{k+1},p^k - p^*\rangle,
\label{return2}
\end{align}
where we have again used that $\varphi_k$ is affine, and also \eqref{eqUpds}.
Continuing, we use Young's inequality applied to~\eqref{return2} to write, for
any $\nu>0$,
\begin{eqnarray}
\frac{\mu}{2}\|z^* - x_l^k\|^2
+
\sum_{i=1}^{n-1} 
\frac{1}{2\Gamma_i}
\|y_i^k - w_i^*\|^2
&\leq&
\frac{1}{2\nu\ualpha^2}\|p^{k+1}-p^k\|^2
+
\frac{\nu}{2}\|p^k - p^*\|^2,
\nonumber 
\end{eqnarray}
which implies
\begin{align}
\|z^* - x_l^k\|^2
+
\sum_{i=1}^{n-1} 
\|y_i^k - w_i^*\|^2
\leq 
\frac{\xi_5}{\nu\ualpha^2}\|p^{k+1}-p^k\|^2
+
\xi_5 \nu\|p^k - p^*\|^2,
\label{bad}
\end{align}
where $\xi_5=\max\{\mu^{-1},\bar{\Gamma}\}$.
From Lemma \ref{lemBasic}, we then deduce that
\begin{align}
\|p^k - p^*\|^2 
&=
\gamma\|z^k - z^*\|^2
+
\sum_{i=1}^{n-1} 
\|w_i^k - w_i^*\|^2
\nonumber\\
&\leq
2\gamma\|z^*-x_l^k\|^2
+
2\gamma\|x_l^k - z^k\|^2
\nonumber\\
&\qquad
+\;
2
\sum_{i=1}^{n-1} 
\|y_i^k - w_i^*\|^2
+
2
\sum_{i=1}^{n-1} 
\|y_i^k - w_i^k\|^2.
\nonumber\\
&\leq
2(1+\gamma)
\left(
\|z^*-x_l^k\|^2
+
\sum_{i=1}^{n-1} 
\|y_i^k - w_i^*\|^2
\right)
+
2\gamma\|x_l^k - z^k\|^2
+
2
\sum_{i=1}^{n-1} 
\|y_i^k - w_i^k\|^2.
\label{mac}
\end{align}
Substituting the upper bound (\ref{bad}) for the term in parentheses in
(\ref{mac}), and then using \eqref{eqLast1} and \eqref{eqLast2} from Lemma
\ref{lemSum} on the other two terms, we conclude that
\begin{align}
\|p^k-p^*\|^2
&\leq 
2(1+\gamma)
\left(
\frac{\xi_5}{\nu\ualpha^2}\|p^{k+1}-p^k\|^2
+
\xi_5 \nu \|p^k - p^*\|^2
\right)
+\frac{2\gamma \xi_1}{\ubeta^2\xi_2^2}\|p^{k+1}-p^k\|^2
\nonumber\\\nonumber
&\qquad
+\;
2E_1\|p^{k+1}-p^k\|^2.
\end{align}
Rearranging this inequality yields
\begin{align}
(1-2\nu(1+\gamma)\xi_5)\|p^k - p^*\|^2\leq 
\left(
\frac{2(1+\gamma)\xi_5}{\nu\ualpha^2}
+\frac{2\gamma \xi_1}{\ubeta^2\xi_2^2}
+
2 E_1
\right)
\|p^{k+1}-p^k\|^2.  \label{godzilla}
\end{align}
We now set
\begin{align*}
\nu=
\frac{1}{4(1+\gamma)\xi_5}.
\end{align*}
Using this value of $\nu$ in~\eqref{godzilla} implies \eqref{eqKeyBound} with
\begin{align}\label{defXiHat2}
E_5
=
\frac{1}{2}
\left(
\frac{8(1+\gamma)^2(\xi_5)^2}{\ualpha^2}
+\frac{2\gamma \xi_1}{\ubeta^2\xi_2^2}
+
2 E_1
\right)^{-1},
\end{align}
which reduces to the expression in \eqref{defXi3}. Considering \eqref{defXi3},
we note that since $\xi_2>0$, $\ubeta>0$, $\mu>0$, and $E_1<\infty$, we must
have $E_5>0$.

Finally, we show that $E_5 \leq 1/4$ as claimed in~\eqref{defXi3}
(in particular, this precludes the nonsensical situation that $E_5 > 1)$.
We write
\begin{align}
\label{rewrite}
E_5
&=
\frac{1}{2}
\left(
\frac{8\xi_1^2(1+\gamma)^2\max\{\mu^{-1},\bar{\Gamma}\}^2
	+
	2\gamma \xi_1
}{\ubeta^2\xi_2^2}
+
2 E_1
\right)^{-1}
\\\nonumber
&\overset{(a)}{\leq} 
\frac{
\xi_2^2
}
{
\gamma\xi_1
}
\\\nonumber
&=
\frac{
\min
\left\{
(1-\sigma)\overline{\rho}^{-1}
,\;
\min_{j\in\Iforw}
\left\{
\overline{\rho}_{j}^{-1} - L_j 
\right\}
\right\}^2
}
{
2n\gamma\left[1+2\gamma^{-1}\left(\bar{L}^2|\Iforw|+\underline{\rho}^{-2}(1+\delta)\right)\right]
}
\\\nonumber
&\leq
\frac{
	\min
	\left\{
	(1-\sigma)\overline{\rho}^{-1}
	,\;
	\min_{j\in\Iforw}
	\left\{
	\overline{\rho}_{j}^{-1} - L_j 
	\right\}
	\right\}^2
}
{
	4n\underline{\rho}^{-2}(1+\delta)
}
\\\nonumber
&\overset{(b)}{\leq}
\frac{
\left(
	(1-\sigma)\overline{\rho}^{-1}
+
	\sum_{j\in\Iforw}
	\left(
	\overline{\rho}_{j}^{-1} - L_j 
	\right)
	\right)^2
}
{
	4n\underline{\rho}^{-2}(1+\delta)(|\Iforw|+1)^2
}
\\\nonumber
&\overset{(c)}{\leq}
\frac{
	\overline{\rho}^{-2}
	+
	\sum_{j\in\Iforw}
	\overline{\rho}_{j}^{-2}
}
{
	4n\underline{\rho}^{-2}(1+\delta)(|\Iforw|+1)
}
\overset{(d)}{\leq}
\frac{
1
}
{
4n(1+\delta)
}
\leq\frac{1}{4},
\end{align}
where we employ the following reasoning: first, (a) uses that all terms within
the parentheses in \eqref{rewrite} are positive and that $\ubeta<2$.  In (b),
we use that the minimum of a set of numbers cannot exceed its average.
Inequality (c) follows by observing that $(1-\sigma)\leq 1$ and $-L_j\leq 0$,
and then using Lemma \ref{lemBasic}. Finally, (d) uses that $\urho\leq
\orho_j$ and $\urho\leq\orho$.
\end{proof}

\section{Simplifications when $n=1$}
\label{secnis1}
Suppose $n=1$, in which case we have either $1\in \Iback$ or $1\in\Iforw$. In
either case, using the convention discussed at the beginning of Section
\ref{subsecNote}, we have $w_1^k = 0$ for all $k$ and the affine function
defined in \eqref{hyper} becomes
\begin{align*}
\varphi_k(p) =\varphi_k(z) = \langle z-x^k,y^k\rangle,
\end{align*}
where we have written $x_1^k = x^k$ and $y_1^k = y^k$. 

If $1\in\Iback$, using $w_1^k = 0$, in the update on lines
\ref{lineaupdate}-\ref{lineBackwardUpdateY} of the algorithm yields
\begin{align*}
x^k + \rho^k y^k = z^k \qquad \text{ and } \qquad y^k\in T x^k,
\end{align*}
where we have written $T_1=T$ and $\rho_1^k = \rho^k$. 
This implies that
$
\varphi_k(z^k) =  \rho^k\|y^k\|^2.
$
Furthermore,
$
\nabla_z\varphi_k  = \gamma^{-1}y^k
$
and so $\|\nabla_z\varphi_k\|^2 = \gamma\cdot\gamma^{-2}\|y^k\|^2 = \gamma^{-1}\|y^k\|^2$. 
Therefore, using \eqref{eqUpds}, we have
\begin{align*}
z^{k+1} = z^k - \frac{\beta_k\varphi_k(p^k)}{\|\nabla\varphi_k\|^2}\nabla_z\varphi_k 
&=
z^k -\beta_k\rho^k y^k \\ &=
(1-\beta_k)z^k + \beta_k x^k =
(1-\beta_k)z^k
+
\beta_k\prox_{\rho^k}(z^k).
\end{align*}
Thus, when $n=1$ and $1\in\Iback$, projective splitting reduces to the relaxed
proximal point method of~\cite{EckBer92}; see also \cite[Theorem
23.41]{bauschke2011convex}. In fact, when one allows for approximate
evaluation of resolvents, projective splitting with $n=1\in\Iback$ reduces to
the hybrid projection proximal-point method of Solodov and Svaiter 
\cite[Algorithm 1.1]{solodov1999hybrid}. However, the error criterion
of~\cite[Eq.~(1.1)]{solodov1999hybrid} is more restrictive than the
conditions~\eqref{err1}--\eqref{err2} that we propose in
Assumption~\ref{assErr}.

On the other hand, if $1\in\Iforw$, considering lines
\ref{ForwardxUpdate}-\ref{ForwardyUpdate} or the algorithm with $w_1^k = 0$
yields
\begin{align*}
x^k = z^k - \rho^k T z^k \qquad \text{ and } \qquad y^k = T x^k.
\end{align*} 
Furthermore, we have 
\begin{align*}
\varphi_k(z^k) = \rho^k \langle T z^k,T x^k\rangle 
\qquad \text{ and } \qquad 
\nabla_z \varphi_k = \gamma^{-1}T x^k.
\end{align*}
Therefore,
\begin{align*}
z^{k+1} 
&= 
z^k 
-
 \frac{\beta_k\varphi_k(p^k)}{\|\nabla\varphi_k\|^2}\nabla_z\varphi_k 
 =
 z^k - 
 \frac{\beta_k\rho^k \langle T z^k,T x^k\rangle }
 { \|Tx^k\|^2}
 Tx^k.
\end{align*}
Thus, the method reduces to
\begin{align*}
x^k &= z^k - \rho^kTz^k
\\
z^{k+1} &= z^k - \tilde{\rho}^k Tx^k,
\end{align*}
where
\begin{align}\label{eqStepAd}
\tilde{\rho}^k= \frac{\beta_k\rho^k \langle T z^k,T x^k\rangle}{\|Tx^k\|^2}.
\end{align}
This is the unconstrained version of the extragradient method
\cite{korpelevich1977extragradient}. When $\beta_k=1$, the stepsize
\eqref{eqStepAd} corresponds to the extragradient stepsize proposed by Iusem
and Svaiter in \cite{iusem1997variant}.  Furthermore, the backtracking
linesearch for the extragradient method also proposed in
\cite{iusem1997variant} is almost equivalent in the unconstrained case to the
linesearch we proposed for processing individual operators within projective
splitting in \cite{johnstone2018projective}, except that Iusem and Svaiter use
a more restrictive termination condition (perhaps necessary because they also
account for the constrained case of the extragradient method).

While these observations may be of interest in their own right, they also have
implications for the convergence rate analysis. In particular, that projective
splitting reduces to the proximal point method suggests that the $\bigO(1/k)$
ergodic convergence rate for \eqref{ProbOpt} derived in Section \ref{secFunc}
cannot be improved beyond a constant factor. This is because the same rate is
unimprovable for the proximal point method under the assumption that the
stepsize is bounded from above and below \cite{guler1991convergence}.
\bibliographystyle{spmpsci}
\bibliography{refs}

\end{document}